\newtheorem{notation}{Notations}[section]
\newtheorem{remarque}{Remark}{\empty{}}
\newtheorem{thm}[notation]{Theorem}
\newtheorem{defin}[notation]{Definition} 
\newtheorem{prop}[notation]{Proposition}
\newtheorem{lem}[notation]{Lemma}
\newcommand{\intpartialD}{\int_{\partial \Omega_D}}
\newcommand{\intomega}{\int_\Omega}
\newcommand{\dive}{\operatorname{div}}
\newcommand{\ind}{I_{B(0,1)}}
\newcommand{\restD}{_{\displaystyle \left|_{\partial \Omega_D}\right.}}
\newcommand{\gradh}{\nabla^h}
\newcommand{\divh}{\operatorname{div}^h}
\newcommand{\dom}{\operatorname{dom}}
\title{Continuous Primal-Dual methods for Image Processing}
\author{ M. Goldman\footnote{CMAP, CNRS UMR 7641, Ecole Polytechnique, 91128 Palaiseau Cedex, France.
e-mail:  goldman@cmap.polytechnique.fr}}
\begin{document}

\maketitle

\begin{abstract}
In this article we study a continuous Primal-Dual method proposed by Appleton and Talbot and generalize it to other problems in image processing. We interpret it as an Arrow-Hurwicz method which leads to a better description of the system of PDEs obtained. We show existence and uniqueness of  solutions and get a convergence result for the denoising problem. Our analysis also yields new \textit{a posteriori} estimates.
\end{abstract}

\textbf{Acknowledgements.} I would like to warmly thank my PhD advisor Antonin Chambolle for suggesting me this problem and for our fruitful discussions. This research was partially supported by ANR project MICA (2006-2009).

\section{Introduction}
In imaging, duality has been recognized as a fundamental ingredient for designing numerical schemes solving variational problems involving a total variation term. Primal-Dual methods were introduced in the field by  Chan, Golub and Mulet in \cite{changolubmulet}. Afterwards,  Chan and Zhu \cite{zhuchan} proposed to rewrite the discrete minimization problem as a min-max and  solve it using an Arrow-Hurwicz \cite{arrowhurwicz} algorithm which is a gradient ascent in one direction and a gradient descent in the other. Just as for the simple gradient descent, one can think of extending this method to the continuous framework. This is in fact what  does the algorithm previously proposed by Appleton and Talbot in \cite{AT} derived by analogy with discrete graph cuts techniques. The first to notice the link between their method and Primal-Dual schemes were Chambolle and al. in \cite{chambollecremerspock}.\\
Besides its intrinsic theoretical interest, considering the continuous framework has also pratical motivations. Indeed, as illustrated by  Appleton and Talbot in \cite{AT}, this approach leads to higher quality results compared with fully discrete schemes such as those proposed by Chan and Zhu.  We will numerically illustrate this in the final part of this paper.\\

 This paper proposes to study the continuous Primal-Dual algorithm following the philosophy of the work done  for the gradient flow by Caselles and its collaborators (see the book of Andreu and al. \cite{andreu} and the references therein). We give a rigorous definition of the system of PDEs which is obtained and show existence and uniqueness of a solution to the Cauchy problem. We  prove strong $L^2$ convergence to the minimizer for the Rudin-Osher-Fatemi model and derive some \textit{a posteriori} estimates. As a byproduct of our analysis we also obtain \textit{a posteriori} estimates for the numerical scheme proposed by Chan and Zhu.\\

\subsection{Presentation of the problem}
Many problems in image processing can be seen as minimizing in $BV\cap L^2$ an energy of the form
\begin{equation}\label{generalprob}J(u)=\intomega |Du|+G(u)+\intpartialD |u-\varphi| \end{equation}

The notation $\displaystyle \intomega |Du| $ stands for the total variation of the function $u$ and is rigourously defined in Definition \ref{TV}. We assume that $\Omega$ is a bounded Lipschitz open set  of $\mathbb{R}^d$ (in applications for image processing, usually $d=2$ or $d=3$) and that $\partial \Omega_D$ is a subset of $\partial \Omega$. The function $\varphi$ being given in $L^1(\partial \Omega_D)$, the term $\displaystyle \intpartialD |u-\varphi|$ is a Dirichlet condition on $\partial \Omega_D$. We  call $\partial \Omega_N$ the complement of $\partial \Omega_D$ in $\partial \Omega$ and  assume that $G$ is convex and continuous  in $L^2$ with
\[G(u)\leq C(1+|u|^p_2) \qquad \textrm{with } 1\leq p\leq +\infty\]

In this paper we note $|u|_2$  the $L^2$ norm of $u$. According to Giaquinta and al. \cite{giaquinta} we have,

\begin{prop}
The functional $J$ is convex and lower-semi-continuous (lsc) in $L^2$. 
\end{prop}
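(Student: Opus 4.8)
The plan is to establish convexity term-by-term, and then to prove lower semicontinuity by decoupling the easy parts from the genuinely delicate interaction between the total variation and the Dirichlet boundary term.

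Convexity is the routine part. The functional $u\mapsto G(u)$ is convex by hypothesis. The total variation $\intomega|Du|$ is, by its very definition, a supremum of the linear functionals $\xi\mapsto\intomega u\,\dive\,\xi$ over admissible vector fields $\xi$, hence it is convex (and lsc). Finally $u\mapsto\intpartialD|u-\varphi|$ is the composition of the linear trace map $u\mapsto u\restD$, the convex map $t\mapsto|t-\varphi|$, and integration over $\partial\Omega_D$, so it is convex as well. A finite sum of convex functionals being convex, $J$ is convex.

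For lower semicontinuity the situation is more subtle. Since $G$ is continuous in $L^2$ it is in particular lsc, and since $\Omega$ is bounded, $L^2$ convergence implies $L^1$ convergence, so the total variation alone is lsc in $L^2$. The obstruction is that the trace operator $BV(\Omega)\to L^1(\partial\Omega)$ is \emph{not} continuous for $L^1$ (or $L^2$) convergence in the bulk, so $u\mapsto\intpartialD|u-\varphi|$ is by itself \emph{not} lsc, and one cannot simply add the three $\liminf$'s. Instead I would show that the coupled functional
\[F(u):=\intomega|Du|+\intpartialD|u-\varphi|\]
is lsc in $L^2$ by rewriting it as a genuine total variation on a larger domain. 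Choose a bounded Lipschitz open set $\Omega'\supset\Omega$ obtained by attaching a collar to $\Omega$ across $\partial\Omega_D$ only, so that $\partial\Omega_N\subset\partial\Omega'$ and $U:=\Omega'\setminus\overline{\Omega}$ meets $\partial\Omega$ exactly along $\partial\Omega_D$. By surjectivity of the trace, fix once and for all some $\Phi\in W^{1,1}(U)\subset BV(U)$ whose trace on $\partial\Omega_D$ equals $\varphi$, and for $u\in BV(\Omega)$ set $\hat u=u$ on $\Omega$ and $\hat u=\Phi$ on $U$. The $BV$ gluing formula across the Lipschitz interface $\partial\Omega_D$ then gives
\[\int_{\Omega'}|D\hat u|=\intomega|Du|+\int_U|D\Phi|+\intpartialD|u-\varphi|=F(u)+C_\Phi,\]
where $C_\Phi=\int_U|D\Phi|$ is independent of $u$, and where no jump term appears along $\partial\Omega_N$ precisely because we did not extend across it. If $u_n\to u$ in $L^2(\Omega)$, then $\hat u_n\to\hat u$ in $L^1(\Omega')$ (they coincide with the fixed $\Phi$ on $U$), so the classical lower semicontinuity of the total variation on $\Omega'$ yields $\int_{\Omega'}|D\hat u|\le\liminf_n\int_{\Omega'}|D\hat u_n|$, hence $F(u)\le\liminf_n F(u_n)$. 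Adding the lsc functional $G$ gives lower semicontinuity of $J$.

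The main obstacle is exactly this failure of the Dirichlet term to be individually lower-semicontinuous: the whole purpose of the extension trick is to absorb the boundary penalty into a single total variation, for which lower semicontinuity is standard. The two technical points I would justify carefully are the gluing identity for the total variation across $\partial\Omega_D$ (which relies on the Lipschitz regularity of $\Omega$ and on the interior/exterior trace theory for $BV$ functions) and the existence of a fixed extension $\Phi$ realizing the prescribed trace $\varphi\in L^1(\partial\Omega_D)$. Both are available in the results of Giaquinta and al.\ \cite{giaquinta}, and everything else follows from the convexity and lower semicontinuity of the total variation.
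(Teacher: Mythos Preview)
Your argument is correct and is precisely the extension-to-a-larger-domain technique of Giaquinta--Modica--Sou\v{c}ek that the paper invokes; note that the paper gives no proof of its own here and simply attributes the result to \cite{giaquinta}. The one point worth flagging is that the collar construction---attaching a Lipschitz $U$ across $\partial\Omega_D$ only, so that $\partial\Omega_N$ remains in $\partial\Omega'$---implicitly requires some mild regularity of the decomposition $\partial\Omega=\partial\Omega_D\cup\partial\Omega_N$ (e.g.\ that $\partial\Omega_D$ be relatively open with Lipschitz relative boundary), which the paper does not state but which is standard in the applications considered.
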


In the following, we also assume that $J$ attains its minimum in $BV\cap L^2$. This is for example true  if $G$ satisfies some coercivity hypothesis or if $G$ is non negative.\\

 Two fundamental applications of our method  are image denoising via total variation regularization and segmentation with geodesic active contours.\\

In the first problem, one starts with a corrupted image $f=\bar{u}+n$ and wants to find the clean image $\bar{u}$. Rudin, Osher and Fatemi proposed to look for an approximation of $\bar{u}$ by minimizing  
\[\intomega |Du|+\frac{\lambda}{2}\intomega (u-f)^2\]
This corresponds to $G(u)=\frac{\lambda}{2}\intomega(u-f)^2$ and $\partial \Omega_D =\emptyset$ in (\ref{generalprob}). For a comprehensive introduction to this subject, we refer to the lecture notes of Chambolle and al. \cite{antolinz}. Figure \ref{lenadenoise} shows the result of denoising using the algorithm of Chan and Zhu.\\

\begin{figure}[h]

 \centering
     \subfigure{
                
          \includegraphics[scale=0.3]{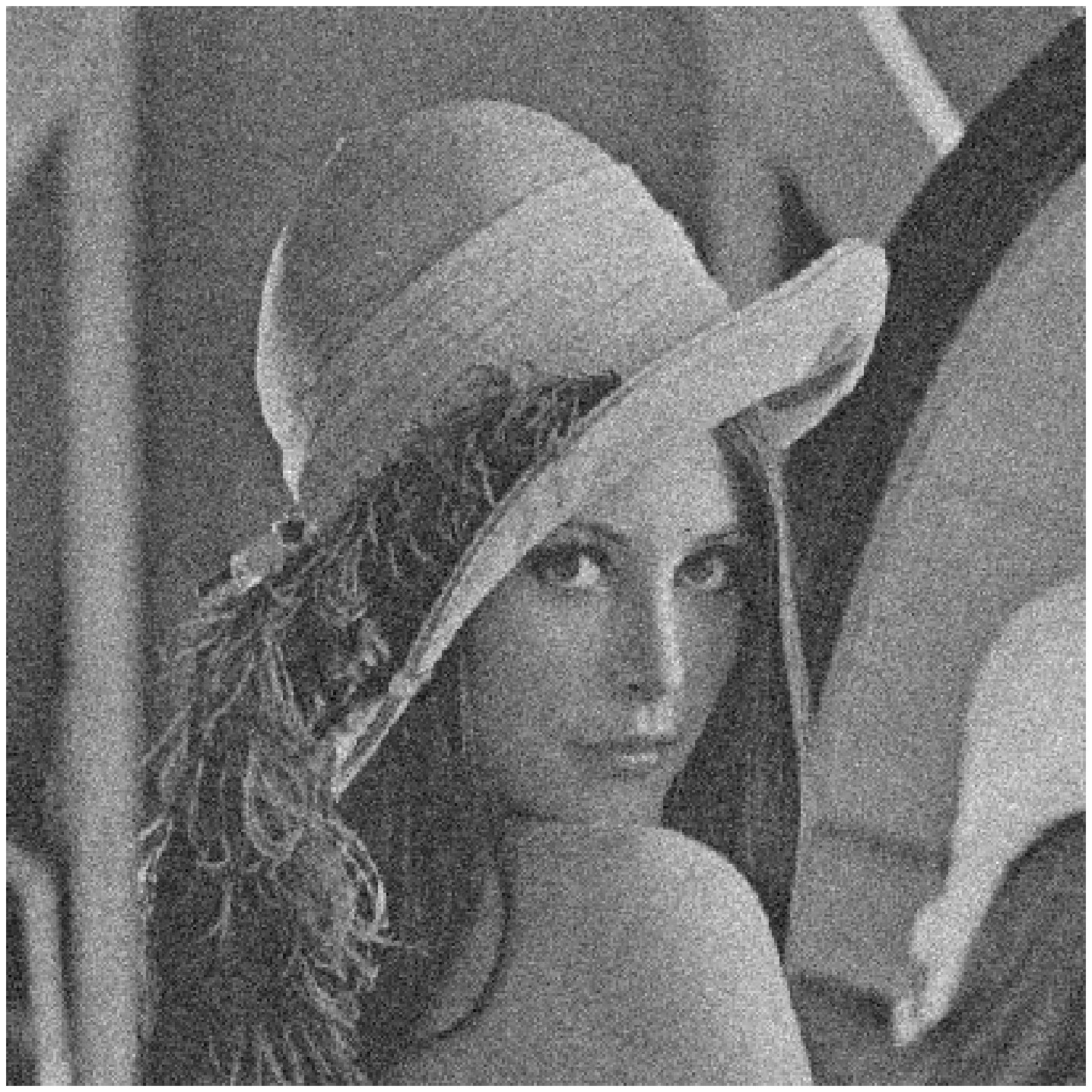}}
     \hspace{.3in}
     \subfigure{
         
          \includegraphics[scale=0.3]{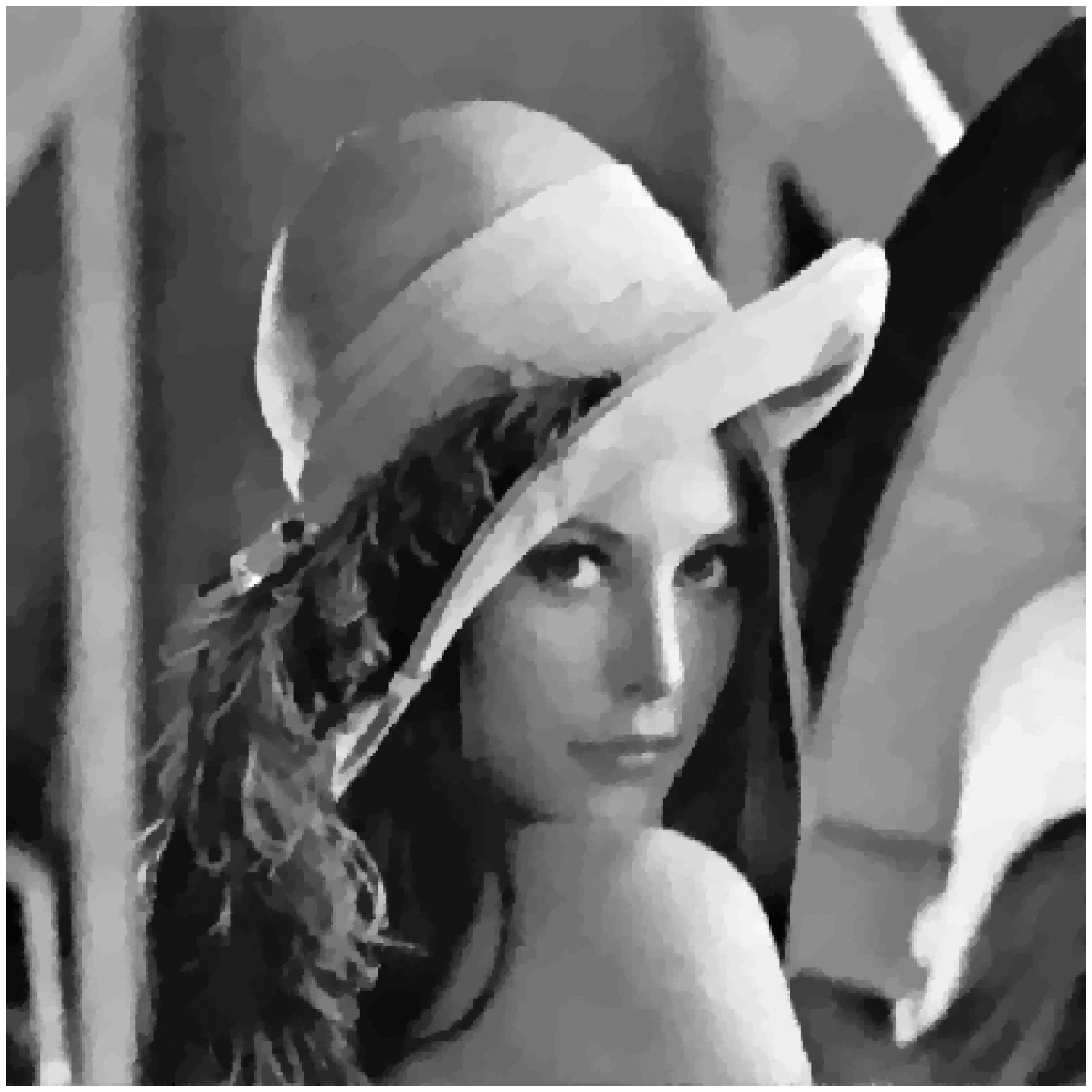}}%		
\caption{Denoising using the ROF model}
\label{lenadenoise}
\end{figure}

 The issue in the second problem is to extract automatically the boundaries of an object within an image. We suppose that we are given two subsets $S$ and $T$ of $\partial \Omega$ such that $S$ lies inside the object that we want to segment and $T$ lies outside.  Caselles and al. proposed in \cite{geodesicactive} to associate a positive function $g$ to the image in a way that $g$ is  high where the gradient of the image is low and vice versa. The object is then segmented by minimizing
\begin{equation}\label{probg} \min_{E \supset S, \, E^c\supset T} \;  \int_{\partial E} g(s) ds \end{equation}

In order to simplify the notations, we will only deal with $g=1$ in the following. It is however straightforward to extend our discussion to general (continuous) $g$. The energy we want to minimize is thus $\intomega |D \chi_E|$. This functional is non convex but by the  coarea formula (see Ambrosio-Fusco-Pallara \cite{amb}), it can be  relaxed to functions $u\in[0,1]$. \\
Let $\varphi=1$ on $S$ and $\varphi=0$ on $T$. Letting $\partial \Omega_D=S\cup T$, and $f$ be a $L^2$ function, our problem can be seen as a special case of the prescribed mean curvature problem (in our original segmentation problem, $f=0$),
\begin{equation}\label{probrelax}\inf_{\stackrel{0\leq u\leq 1}{u=\varphi \textrm{ in } \partial \Omega_D}} \intomega |Du| +\intomega f u\end{equation}

If $u$ is a solution of (\ref{probrelax}), a minimizer $E$ of (\ref{probg}) is then given by any superlevel of $u$, namely $E=\{u>s\}$ for any $s \in ]0,1[$. This convexification argument is somewhat classical but more details can be found  in the lecture notes \cite{antolinz} Section 3.2.2.\\
It is however well known that in general the infimum is not attained  because of the lack of compactness for the boundary conditions in $BV$. Following the ideas of Giaquinta and al. \cite{giaquinta}  we have to relax the boundary conditions by adding a Dirichlet term $\intpartialD |u-\varphi|$ to the functional. We also have to deal with  the hard constraint, $0\leq u\leq 1$. This last issue will be discussed afterwards but it brings some mathematical difficulties that we were not able to solve. Fortunately, our problem is equivalent (see \cite{chambollecremerspock})  to the minimization of the unsconstrained problem
\[J(u)=\inf_{u\in BV(\Omega)} \intomega |Du|+\intpartialD |u-\varphi|+\intomega f^+ |u|+\intomega f^-|1-u| \]
Here $f^+=\max(f,0)$ and $f^-=\max(-f,0)$. \\
We give in Figure \ref{levures} the result of this segmentation on yeasts. The small square is the set $S$ and the set $T$ is taken to be the image boundary. The study of this problem was in fact our first motivation for this work.

\begin{figure}[h]
\centering

          \includegraphics[scale=0.35]{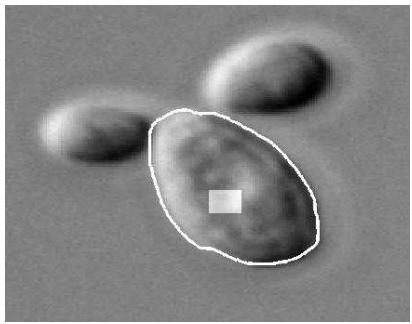}
     %\vspace{.3in}
    	
\caption{Yeast segmentation}
\label{levures}
\end{figure}

\subsection{Idea of the Primal-Dual method}
Formally, the idea behind the Primal-Dual method is using the definition of $\intomega |Du|$  (see Definition \ref{TV}) in order to write $J$ as

\[J(u)=\sup_{\stackrel{\xi \in \mathcal{C}^1_c(\Omega)}{|\xi|_{\infty} \leq 1}} K(u,\xi)\]

Where $K(u,\xi)=-\intomega u\dive(\xi) +\intpartialD |u-\varphi|+G(u)$. Then, finding a minimum of $J$ is equivalent to finding a saddle point of $K$. This is done  by a gradient descent in  $u$ and a gradient ascent in $\xi$. \\
Let $\ind(\xi)$ be the indicator function of the unit ball in $L^\infty$ (it takes the value 0 if $|\xi|_\infty \leq 1$ and $+\infty$ otherwise) and $\partial$ denotes the subdifferential (see Ekeland-Temam \cite{ekelandtemam} for the definition ). As 
\[K(u,\xi)=-\intomega u\dive(\xi) +\intpartialD |u-\varphi|+G(u)-\ind(\xi)\]
we have $\nabla_u K \simeq -\dive \xi +\partial G(u) $ and $\nabla_\xi K \simeq Du -\partial \ind(\xi)$. We are thus led to solve the system of PDEs:

\begin{equation}\begin{cases}\label{formal}
\displaystyle  \partial_t u=\dive (\xi)-\partial G (u)\\[8pt]
\displaystyle \partial_t \xi=Du-\partial \ind (\xi)\\[8pt]
+ \textrm{ boundary conditions}
\end{cases}
\end{equation}

 This system is almost the one proposed by Appleton and Talbot in \cite{AT} for the segmentation problem.\\

%\displaystyle \left(\begin{array}{c} u\\\xi \end{array} \right)
 Let us  remark that, at least formally, the  differential operator \\
$A(u,\xi)=\displaystyle \left(\begin{array}{c} -\, \text{div} \, \xi +\partial G(u)\\	
-Du +\partial \ind(\xi)\end{array}\right)$ verifies by Green's formula and the monotonicity of the subdifferential (see Proposition \ref{convmax}),
\[\langle A(u,\xi) , (u,\xi) \rangle =\langle \partial G(u),u\rangle +\langle\partial \ind (\xi),\xi\rangle\geq 0\] which means that $A$ is monotone (see Definition \ref{monotone}).\\
 In the next section we   recall some facts about the theory of maximal monotone operators and its applications for finding saddle points. In the last section we  use it to  give a  rigourous meaning to the hyperbolic system (\ref{formal}) together with  existence and uniqueness of solutions of the Cauchy problem.

 \section{Maximal Monotone Operators}
Following Br\'ezis  \cite{brezoperateur}, we present briefly in the first part of this section  the theory of maximal monotone operators. In the second part we show how this theory sheds light on the general Arrow-Hurwicz method. We mainly give results  found in Rockafellar's paper \cite{articlerockafellar}.

\subsection{Definitions and first properties of maximal monotone operators}

\begin{defin}
Let $X$ be an Hilbert space. An operator is a multivaluated mapping $A$ from $X$ into $\mathcal{P}(X)$. We call  $D(A)=\{x\in X \, / \,  A(x) \neq \varnothing\}$ the domain of $A$ and $\displaystyle R(A)=\bigcup_{x\in X} A(x)$ its range. We  identify $A$ and its graph in $X\times X$.
\end{defin}

\begin{defin}\label{monotone}
An operator $A$ is monotone if :
\[\forall x_1,x_2 \in D(A), \qquad \langle A(x_1)-A(x_2),x_1-x_2 \rangle \geq 0\]
or more precisely if for all $x_1^*\in A(x_1)$ and $x_2^* \in A(x_2)$,
\[\langle x_1^*-x_2^*,x_1-x_2 \rangle \geq 0\]

It is maximal monotone if it is maximal in the set of  monotone operators. The maximality is to be understood in the sense of graph inclusion.
\end{defin}

One of the essential results for us is the maximal monotonicity of the subgradient for convex functions.
\begin{prop}\emph{\cite{brezoperateur}}\label{convmax}
Let $\varphi$ be a proper lower-semi-continuous convex function on $X$ then $\partial \varphi$ is a maximal  monotone operator.
\end{prop}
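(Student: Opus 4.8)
The plan is to separate the statement into two parts: monotonicity, which is immediate, and maximality, which carries the real content. For monotonicity, I would take $x_1^* \in \partial \varphi(x_1)$ and $x_2^* \in \partial \varphi(x_2)$ and write down the two defining subgradient inequalities
\[
\varphi(x_2) \geq \varphi(x_1) + \langle x_1^*, x_2 - x_1 \rangle, \qquad
\varphi(x_1) \geq \varphi(x_2) + \langle x_2^*, x_1 - x_2 \rangle .
\]
Adding these and rearranging yields $\langle x_1^* - x_2^*, x_1 - x_2 \rangle \geq 0$, which is exactly the monotonicity required by Definition \ref{monotone}.

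For maximality, the strategy is to first establish the range (surjectivity) condition $R(I + \partial \varphi) = X$, from which maximality follows by a short abstract argument. To obtain the range condition I would fix $f \in X$ and seek to solve the inclusion $f \in x + \partial \varphi(x)$. I would recognize this as the optimality condition for minimizing
\[
\psi(x) = \tfrac{1}{2}\, |x - f|^2 + \varphi(x).
\]
Since $\varphi$ is proper, convex and lsc it is bounded below by a continuous affine function, so $\psi$ is strictly convex, coercive and lsc; by the direct method of the calculus of variations it attains a unique minimizer $x_0$. Reading off the inclusion requires the Moreau--Rockafellar sum rule: because the quadratic term is everywhere continuous, $\partial \psi(x_0) = (x_0 - f) + \partial \varphi(x_0)$, and $0 \in \partial \psi(x_0)$ gives $f \in x_0 + \partial \varphi(x_0)$. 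Hence $I + \partial \varphi$ is onto.

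To deduce maximality from the range condition, I would take any pair $(y, y^*)$ that is monotonically related to the whole graph of $\partial \varphi$, that is $\langle y^* - x^*, y - x \rangle \geq 0$ for every $(x, x^*) \in \partial \varphi$, and show $(y, y^*) \in \partial \varphi$. Using surjectivity of $I + \partial \varphi$, there is $x_0$ with $y + y^* - x_0 \in \partial \varphi(x_0)$; testing the monotonicity inequality against this particular point collapses it to $-\,|y - x_0|^2 \geq 0$, which forces $x_0 = y$ and therefore $y^* \in \partial \varphi(y)$. This proves that no monotone extension of the graph is possible, i.e. maximality.

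The main obstacle I expect is the range step, and within it the justification of the splitting $\partial \psi(x_0) = (x_0 - f) + \partial \varphi(x_0)$: the inclusion $(x_0 - f) + \partial \varphi(x_0) \subseteq \partial \psi(x_0)$ is trivial from the definitions, but the reverse inclusion — that every subgradient of the sum decomposes — is precisely the nontrivial content of the Moreau--Rockafellar theorem and relies on the continuity of the quadratic perturbation together with a Hahn--Banach separation argument. By contrast, the existence and uniqueness of the minimizer are routine once coercivity and weak lower semicontinuity (furnished by convexity plus lower semicontinuity) are in hand.
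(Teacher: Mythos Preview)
The paper does not give its own proof of this proposition; it is stated with a citation to Br\'ezis's monograph \cite{brezoperateur} and used as a black box. Your proposal is correct and is essentially the standard argument one finds there: monotonicity from the two subgradient inequalities, surjectivity of $I+\partial\varphi$ via the proximal minimization, and then the Minty-type contradiction to rule out any strict monotone extension.

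One remark on the step you single out as the obstacle. Invoking the Moreau--Rockafellar sum rule is perfectly valid here (the quadratic term is continuous), but it is not actually needed. If $x_0$ minimizes $\psi$, then for any $y\in\dom\varphi$ and $t\in(0,1]$ one compares $\psi(x_0)\le\psi\big((1-t)x_0+ty\big)$, uses only the convexity of $\varphi$ on the right, expands the quadratic, divides by $t$ and lets $t\to 0^+$. This yields directly
\[
\varphi(y)\;\ge\;\varphi(x_0)+\langle f-x_0,\,y-x_0\rangle,
\]
i.e.\ $f-x_0\in\partial\varphi(x_0)$, without any separation or Hahn--Banach ingredient. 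So the range step can be made fully elementary, and the only genuinely nontrivial input is the existence of the minimizer (coercivity plus weak lower semicontinuity), which you handle correctly.
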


Before stating the main theorem of this theory, namely the existence of solutions of the Cauchy problem $-u' \in A(u(t))$ we need one last definition.

\begin{defin}
Let $A$ be maximal monotone. For $x\in D(A)$ we call $A^\circ(x)$ the projection of $0$ on $A(x)$ (it exists since $A(x)$ is closed and convex, see Brézis \cite{brezoperateur} p. 20).
\end{defin}

We now turn to the theorem.
\begin{thm}\emph{\cite{brezoperateur}}\label{Cauchy}
Let $A$ be maximal monotone then for all  $u_0\in D(A)$, there exists a unique  function $u(t)$ from $[0,+\infty[$ into $X$ such that
\begin{itemize}
\item $u(t)\in D(A)$ for all $t>0$
\item $u(t)$ is Lipschitz continous on  $[0,+\infty[$, i.e $ u' \in L^\infty (0,+\infty;X)$ (in the sense of distributions) and 
\[ \left| u' \right|_{L^\infty (0,+\infty;X)} \leq |A^\circ(u_0)| \]
\item $\displaystyle - u'(t) \in A(u(t))$ for almost every $t$
\item $u(0)=u_0$
\end{itemize}
Moreover $u$ verifies,
\begin{itemize}
\item $u$ has a right derivative for every $t\in[0,+\infty[$ and    $\displaystyle -\frac{d^+ u}{dt} \in A^\circ(u(t))$
\item the function $t\rightarrow A^\circ(u(t))$ is right continuous and   $t\rightarrow |A^\circ(u(t))|$ is non increasing
\item if $u$ and $\hat{u}$ are two solutions then $|u(t)-\hat{u}(t)|\leq |u(0)-\hat{u}(0)|$
\end{itemize}
\end{thm}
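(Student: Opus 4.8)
The plan is to prove this via the Yosida regularization of $A$, which is the standard route in the theory of maximal monotone operators. For $\lambda>0$ I would first introduce the resolvent $J_\lambda=(I+\lambda A)^{-1}$. Because $A$ is maximal monotone, Minty's theorem guarantees $R(I+\lambda A)=X$, so $J_\lambda$ is defined on all of $X$, single-valued, and a contraction. I then set the Yosida approximation $A_\lambda=\frac{1}{\lambda}(I-J_\lambda)$ and record the three facts I will use: $A_\lambda$ is monotone and Lipschitz with constant $1/\lambda$; one has $A_\lambda x\in A(J_\lambda x)$ for every $x$; and for $x\in D(A)$ the bound $|A_\lambda x|\le |A^\circ x|$ holds, with $A_\lambda x\to A^\circ x$ as $\lambda\to 0$.

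Since $A_\lambda$ is globally Lipschitz, the regularized Cauchy problem $-u_\lambda'=A_\lambda(u_\lambda)$ with $u_\lambda(0)=u_0$ has a unique global solution by the Cauchy-Lipschitz theorem. Differentiating and using monotonicity of $A_\lambda$ shows that $t\mapsto |A_\lambda(u_\lambda(t))|$ is non increasing, which yields the uniform estimate $|u_\lambda'(t)|=|A_\lambda(u_\lambda(t))|\le |A_\lambda(u_0)|\le |A^\circ(u_0)|$. This is the bound that will survive in the limit.

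The technical heart is showing that $\{u_\lambda\}$ is Cauchy in $C([0,T];X)$. Computing $\frac{1}{2}\frac{d}{dt}|u_\lambda-u_\mu|^2=-\langle A_\lambda u_\lambda-A_\mu u_\mu,\,u_\lambda-u_\mu\rangle$ and writing $u_\lambda-u_\mu=(J_\lambda u_\lambda-J_\mu u_\mu)+\lambda A_\lambda u_\lambda-\mu A_\mu u_\mu$, the monotonicity of $A$ applied to the pair $(J_\lambda u_\lambda,J_\mu u_\mu)$ makes the middle inner product nonnegative, leaving a remainder controlled by a constant times $(\lambda+\mu)|A^\circ u_0|^2$. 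Integrating gives $|u_\lambda(t)-u_\mu(t)|^2\le C(\lambda+\mu)\,t$, so $u_\lambda\to u$ uniformly on bounded intervals; the limit inherits the Lipschitz bound, and since the graph of $A$ is closed one passes to the limit in $-u_\lambda'\in A(J_\lambda u_\lambda)$ to get $-u'(t)\in A(u(t))$ a.e. Uniqueness and the contraction estimate are then immediate: for two solutions $u,\hat u$, monotonicity gives $\frac{d}{dt}|u-\hat u|^2=2\langle u'-\hat u',\,u-\hat u\rangle\le 0$.

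Finally, the finer statements — existence of a right derivative at every $t$, the inclusion $-\frac{d^+u}{dt}\in A^\circ(u(t))$, right continuity of $t\mapsto A^\circ(u(t))$, and monotonicity of $t\mapsto|A^\circ(u(t))|$ — are where I expect the main obstacle to lie, since they do not follow from the contraction argument alone. I would obtain them by applying the contraction estimate to the time-translated solution $u(\cdot+h)$: this forces the difference quotients $\frac{1}{h}(u(t+h)-u(t))$ to have non increasing norm in $h$, so a monotone-limit argument produces a right derivative everywhere, and its membership in $A^\circ(u(t))$ follows from the minimal-norm property defining $A^\circ$ together with the closedness of the graph.
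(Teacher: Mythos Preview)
The paper does not supply its own proof of this theorem: it is stated with a citation to Br\'ezis \cite{brezoperateur} and used as a black box, so there is nothing in the paper to compare your argument against. Your sketch is the classical Yosida-approximation route, which is precisely the proof one finds in the cited reference, and the outline you give is accurate at the level of a plan; in particular the Cauchy estimate $|u_\lambda(t)-u_\mu(t)|^2\le C(\lambda+\mu)t$ and the passage to the limit via demiclosedness of $A$ are the right moves, and the time-translation trick for the right-derivative statements is also the standard device.
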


\subsection{Application to Arrow-Hurwicz methods}

Let us now see how this theory can be applied for tracking saddle points. As mentioned before, we follow here \cite{articlerockafellar}. We start with some definitions.

\begin{defin} Let $X=Y\oplus Z$ where $Y$ and $Z$ are two Hilbert spaces. A proper saddle function on  $X$ is a function  $K$ such that :
\begin{itemize}
\item for all $y \in Y$, the function $K(y,\cdot)$ is convex 
\item for all $z\in Z$, the function $K(\cdot,z)$ is concave
\item there exists $x=(y,z)$ such that $K(y,z')<+\infty$ for all $z'\in Z$ and $K(y',z)>-\infty$ for all $y' \in Y$. The set of  $x$ for which it holds, is called the effective domain of $K$ and is noted $\dom   K$.
\end{itemize}
\end{defin}

\begin{defin}
A point $(y,z)\in X$ is called a saddle point of $K$ if 
\[K(y,z')\leq K(y,z) \leq K(y',z) \qquad \forall y' \in Y , \, \forall z' \in Z \]
\end{defin}

We then have, 

\begin{prop}
A point  $(y,z)$ is a saddle point of a saddle function $K$, if and only if 
\[ K(y,z)=\sup_{z'\in Z} \inf_{y' \in Y} K(y',z')= \inf_{y' \in Y} \sup_{z'\in Z} K(y',z') \]
\end{prop}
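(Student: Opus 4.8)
The plan is to sandwich the value $K(y,z)$ between the two quantities $\sup_{z'}\inf_{y'}K$ and $\inf_{y'}\sup_{z'}K$, the decisive ingredient being the elementary ``weak duality'' inequality
\[\sup_{z'\in Z}\inf_{y'\in Y}K(y',z')\;\leq\;\inf_{y'\in Y}\sup_{z'\in Z}K(y',z'),\]
valid for any function on $Y\times Z$. I would prove it first: for every pair $(\bar y,\bar z)$ one has $\inf_{y'}K(y',\bar z)\leq K(\bar y,\bar z)\leq \sup_{z'}K(\bar y,z')$, and since the two ends no longer involve the same variables, taking the supremum over $\bar z$ on the left and the infimum over $\bar y$ on the right yields the claim. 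I abbreviate $g(z')=\inf_{y'}K(y',z')$ and $h(y')=\sup_{z'}K(y',z')$, so that weak duality reads $\sup_{z'}g(z')\leq\inf_{y'}h(y')$.

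For the direct implication, assume $(y,z)$ is a saddle point. The inequality $K(y,z')\leq K(y,z)$ for all $z'$, sharp at $z'=z$, gives $h(y)=\sup_{z'}K(y,z')=K(y,z)$; symmetrically $K(y,z)\leq K(y',z)$ for all $y'$ gives $g(z)=\inf_{y'}K(y',z)=K(y,z)$. Hence
\[\inf_{y'}h(y')\;\leq\;h(y)\;=\;K(y,z)\;=\;g(z)\;\leq\;\sup_{z'}g(z').\]
Read together with weak duality $\sup_{z'}g(z')\leq\inf_{y'}h(y')$, this chain collapses entirely to equalities, which is exactly the asserted identity $K(y,z)=\sup_{z'}g(z')=\inf_{y'}h(y')$.

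For the converse I would read the hypothesis as saying that the named point realises both outer optimisations, i.e. $z$ attains the supremum in $\sup_{z'}g(z')$ and $y$ attains the infimum in $\inf_{y'}h(y')$. Then $g(z)=K(y,z)$ unfolds as $\inf_{y'}K(y',z)=K(y,z)$, that is $K(y',z)\geq K(y,z)$ for every $y'$, while $h(y)=K(y,z)$ unfolds as $\sup_{z'}K(y,z')=K(y,z)$, that is $K(y,z')\leq K(y,z)$ for every $z'$. Together these are precisely the two defining inequalities of a saddle point.

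I expect the converse to be the delicate step, for a clean reason: the bare numerical equality $K(y,z)=\sup_{z'}g(z')=\inf_{y'}h(y')$ does not by itself force the saddle inequalities, since a point could take the common value without being extremal. The argument must genuinely use that $(y,z)$ is where the outer supremum and infimum are attained, and making this attainment explicit --- which is implicit in writing $K(y,z)$ for the common saddle value --- is the only point requiring care; the direct implication and weak duality are purely formal manipulations.
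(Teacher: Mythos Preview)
The paper does not supply its own proof; it merely cites Rockafellar's \emph{Convex Analysis}, p.~380, so there is no argument in the text to compare against. Your treatment of weak duality and of the forward implication is exactly the standard one and is correct.

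Your caution about the converse is well placed, and in fact the literal statement of the proposition is false in that direction. A clean counterexample: on $Y=Z=\mathbb{R}$ take $K(y,z)=yz$, which is a proper saddle function. One computes $\sup_{z'}\inf_{y'}K=\inf_{y'}\sup_{z'}K=0$, yet the point $(y,z)=(1,0)$ satisfies $K(1,0)=0$ without being a saddle point (the inequality $K(1,z')\le K(1,0)$ fails for $z'>0$). So the bare numerical equality $K(y,z)=\sup_{z'}g(z')=\inf_{y'}h(y')$ is \emph{not} sufficient, precisely as you suspected. What Rockafellar actually proves (Theorem~36.2) is that $(y,z)$ is a saddle point if and only if the saddle value exists \emph{and} $y$ attains $\inf_{y'}h(y')$ and $z$ attains $\sup_{z'}g(z')$; under that reading your converse argument is correct. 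Your instinct that ``the argument must genuinely use that $(y,z)$ is where the outer supremum and infimum are attained'' is exactly right --- but this is an additional hypothesis, not something ``implicit in writing $K(y,z)$ for the common saddle value.''
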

The proof of this proposition is easy and can be found in Rockafellar's book \cite{livrerockafellar} p.380.

The next theorem shows that the Arrow-Hurwicz method always provides a monotone operator.

\begin{thm}\emph{\cite{articlerockafellar}}
Let $K$ be a proper  saddle function. For $x=(y,z)$ let 
\[
T(x)=  \left\{  (y^*,z^*) \in Y^* \oplus Z^* / \begin{array}{c} y^* \text{ is a subgradient of } K(\cdot,z) \text{ in }  y    \\
   z^* \text{ is a subgradient of } -K(y,\cdot) \text{ in } z \end{array} \right\}
\]
Then $T$ is a monotone operator with $D(T) \subset \dom K$.
\end{thm}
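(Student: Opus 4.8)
The plan is to verify both claims directly from the defining subgradient inequalities of the convex partial functions $K(\cdot,z)$ and $-K(y,\cdot)$. First I would establish the inclusion $D(T) \subset \dom K$. Take $x=(y,z)\in D(T)$ and $(y^*,z^*)\in T(x)$. Since $y^*$ is a subgradient of the convex function $K(\cdot,z)$ at $y$, the subgradient inequality $K(y',z)\geq K(y,z)+\langle y^*,y'-y\rangle$ holds for every $y'\in Y$; as the right-hand side is a finite affine function of $y'$ (note that $K(y,z)$ is finite, since a subgradient exists there), this forces $K(y',z)>-\infty$ for all $y'$. Symmetrically, $z^*$ being a subgradient of the convex function $-K(y,\cdot)$ at $z$ gives $-K(y,z')\geq -K(y,z)+\langle z^*,z'-z\rangle$, i.e. a finite affine upper bound on $K(y,\cdot)$, whence $K(y,z')<+\infty$ for all $z'$. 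These are exactly the two conditions defining $\dom K$.

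For monotonicity I would pick two points $x_1=(y_1,z_1)$ and $x_2=(y_2,z_2)$ in $D(T)$ with $(y_i^*,z_i^*)\in T(x_i)$, and write out the four subgradient inequalities evaluated at the ``crossed'' arguments: convexity in $y$ at $y_1$ tested against $y_2$ and at $y_2$ tested against $y_1$, together with convexity of $-K$ in $z$ at $z_1$ tested against $z_2$ and at $z_2$ tested against $z_1$. Summing these four inequalities, the two mixed function values $K(y_2,z_1)$ and $K(y_1,z_2)$ each appear once with a $+$ and once with a $-$ sign, and the diagonal terms $K(y_i,z_i)$ likewise cancel; what survives is precisely $0\geq -\langle y_1^*-y_2^*,\,y_1-y_2\rangle-\langle z_1^*-z_2^*,\,z_1-z_2\rangle$, which rearranges into the monotonicity inequality $\langle (y_1^*,z_1^*)-(y_2^*,z_2^*),\,(y_1,z_1)-(y_2,z_2)\rangle\geq 0$.

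The one point that requires care, and which I expect to be the main obstacle, is that a saddle function may take the values $\pm\infty$, so the cancellation above risks producing indeterminate $\infty-\infty$ expressions. I would resolve this by observing that the mixed values are in fact finite: the inequality coming from convexity in $y$ at $y_1$ yields $K(y_2,z_1)\geq(\text{finite})$, while the inequality coming from concavity of $K$ in $z$ at $z_2$ yields $-K(y_2,z_1)\geq(\text{finite})$, i.e. $K(y_2,z_1)\leq(\text{finite})$; the two bounds squeeze $K(y_2,z_1)$ to a finite value, and the same argument applies to $K(y_1,z_2)$. Here I again use that the diagonal values $K(y_i,z_i)$ are finite, as noted above. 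With all four function values finite, the summation is legitimate and the monotonicity inequality follows; combined with the first paragraph, this proves that $T$ is monotone with $D(T)\subset\dom K$.
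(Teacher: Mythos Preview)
Your argument is correct. The domain inclusion follows exactly as you wrote from the two subgradient inequalities and finiteness of $K(y,z)$ at a point of subdifferentiability; for monotonicity, the four crossed subgradient inequalities sum to the desired sign, and your squeeze argument legitimately disposes of the $\infty-\infty$ risk by pinning the mixed values $K(y_2,z_1)$ and $K(y_1,z_2)$ between finite bounds.

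There is nothing to compare against, however: the paper does not prove this theorem. It is stated with a citation to Rockafellar \cite{articlerockafellar} and no proof environment follows; the paper only sketches a proof for the later, harder statement (Theorem~\ref{maxpointselle} on maximal monotonicity), where the key device is the partial Legendre transform $H(y,z^*)=\sup_z\{\langle z^*,z\rangle+K(y,z)\}$ and Lemma~\ref{lemmefond}. Your direct computation is the standard elementary proof of the present statement and is entirely adequate; the $H$-transform machinery is not needed here and is reserved in the paper for the maximality assertion.
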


We can now characterize the saddle points of $K$ using the operator $T$. 
\begin{prop}\emph{\cite{articlerockafellar}}
Let $K$  be a proper saddle function then a point  $x$  is a saddle point of $K$ if and only if $0\in T(x)$.
\end{prop}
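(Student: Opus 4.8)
The plan is to establish both implications simultaneously by unwinding the definition of $T$ and recognizing that, for any function, the assertion that $0$ is a subgradient at a point is nothing more than Fermat's optimality condition there. By the very definition of $T$, the statement $0 \in T(x)$ with $x=(y,z)$ decouples into two conditions that live on the two blocks $Y$ and $Z$ separately: $0$ is a subgradient of $K(\cdot,z)$ at $y$, and $0$ is a subgradient of $-K(y,\cdot)$ at $z$. Because these involve the two variables independently, I would treat them one at a time and then recombine.

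First I would isolate the elementary principle that carries the whole argument. For a convex function $h$, the defining subgradient inequality $h(x') \ge h(x_0)+\langle p, x'-x_0\rangle$ with $p=0$ reads $h(x') \ge h(x_0)$ for all $x'$; hence $0$ is a subgradient of $h$ at $x_0$ if and only if $x_0$ minimizes $h$. This is an equivalence requiring no regularity or compactness --- one direction is the substitution $p=0$, the other is immediate --- and it is the only ingredient beyond the definitions.

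Next I would apply this principle to each block. Using it for $K(\cdot,z)$ at $y$, the condition that $0$ is a subgradient of $K(\cdot,z)$ at $y$ is equivalent to $K(y,z)\le K(y',z)$ for every $y'\in Y$, which is exactly the right-hand inequality in the definition of a saddle point. Using it for $-K(y,\cdot)$ at $z$, the condition that $0$ is a subgradient of $-K(y,\cdot)$ at $z$ becomes $-K(y,z)\le -K(y,z')$, that is $K(y,z')\le K(y,z)$ for every $z'\in Z$, which is the left-hand saddle inequality. The point demanding attention here is the bookkeeping of which function each subgradient refers to, together with the convexity and concavity of $K$ in each block; writing each condition on the correct function is what makes Fermat's rule produce the two inequalities with the orientation matching the definition of a saddle point.

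Finally I would conjoin the two equivalences. Altogether $0\in T(x)$ holds if and only if both $K(y,z')\le K(y,z)$ for all $z'$ and $K(y,z)\le K(y',z)$ for all $y'$, which is verbatim the definition of $(y,z)$ being a saddle point of $K$. Both directions of the equivalence are obtained at once, since every step above is an equivalence rather than a one-way implication. I expect no genuine obstacle: the only delicate part is keeping the sign conventions straight so that each zero-subgradient condition is matched to the correct one of the two saddle inequalities, the statement being in essence Fermat's rule applied separately in $y$ and in $z$.
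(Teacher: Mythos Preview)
Your argument is correct: unfolding the definition of $T$ and applying Fermat's rule $0\in\partial h(x_0)\Leftrightarrow x_0$ minimizes $h$ separately to $K(\cdot,z)$ and $-K(y,\cdot)$ yields precisely the two saddle inequalities, with every step an equivalence. The paper does not give its own proof of this proposition (it is cited from Rockafellar), but the remark immediately following it---that the statement should be compared with the condition $0\in\partial f(x)$ for convex $f$---points to exactly the idea you use.
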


\begin{remarque}
This property is to be compared with the minimality condition $0\in \partial f(x)$ for convex functions $f$.\end{remarque}

The next theorem shows that for regular enough saddle functions, the corresponding operator $T$ is maximal.

\begin{thm}\emph{\cite{articlerockafellar}}\label{maxpointselle}
Let  $K$ be a proper saddle function on $X$. Suppose that $K$ is lsc in $y$ and upper-semi-continuous in $z$ then $T$ is maximal monotone.
\end{thm}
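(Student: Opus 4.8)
The operator $T$ is already monotone by the preceding theorem, so the entire content of the statement is its maximality. The cleanest route is through Minty's surjectivity criterion: a monotone operator $A$ on a Hilbert space is maximal if and only if $R(I+A)=X$ (this is one of the basic results of the theory, see Br\'ezis \cite{brezoperateur}). It therefore suffices to prove that for every $(a,b)\in Y\oplus Z$ the inclusion $(a,b)\in(I+T)(y,z)$ has a solution.

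The plan is to realize such a solution as the saddle point of a perturbation of $K$. For $(a,b)$ fixed I would introduce
\[
\hat K(y,z)=K(y,z)+\tfrac12|y-a|^2-\tfrac12|z-b|^2 .
\]
The added quadratic terms are finite and differentiable everywhere, so $\hat K$ is again a proper saddle function, lsc in $y$ and usc in $z$; moreover the perturbation forces $\hat K(\cdot,z)\to+\infty$ as $|y|\to\infty$ and $\hat K(y,\cdot)\to-\infty$ as $|z|\to\infty$, which will make the minimization in $y$ and the maximization in $z$ coercive. Writing $\hat T$ for the operator attached to $\hat K$, the Moreau--Rockafellar sum rule (applicable precisely because the perturbation is everywhere finite and smooth) gives $\hat T(y,z)=T(y,z)+(y-a,\,z-b)$. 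Consequently $0\in\hat T(\hat y,\hat z)$ is equivalent to $(a,b)\in(I+T)(\hat y,\hat z)$, and by the characterization of saddle points recalled above, $0\in\hat T(\hat y,\hat z)$ holds exactly when $(\hat y,\hat z)$ is a saddle point of $\hat K$. Thus it is enough to show that $\hat K$ admits a saddle point.

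This existence statement is where the hypotheses genuinely enter and is the main obstacle. I would extract the saddle point from a minimax theorem (Sion/Ky Fan, or the saddle-point existence results of Ekeland--Temam \cite{ekelandtemam}): the coercivity supplied by the quadratic terms confines the relevant minimization in $y$ and maximization in $z$ to bounded, hence weakly relatively compact, subsets of the Hilbert space, while the lower semicontinuity in $y$ and upper semicontinuity in $z$ — upgraded to the weak topology by convexity in $y$ and concavity in $z$ — guarantee that the infimum and supremum are attained and that $\inf_{y}\sup_{z}\hat K=\sup_{z}\inf_{y}\hat K$. The strict convexity in $y$ and strict concavity in $z$ coming from the quadratic terms then give uniqueness of the saddle point (which, in passing, shows that the resolvent $(I+T)^{-1}$ is single-valued).

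Once the saddle point $(\hat y,\hat z)$ of $\hat K$ is produced, the equivalence of the previous paragraph yields $(a,b)\in(I+T)(\hat y,\hat z)$, so $R(I+T)=X$, and Minty's criterion concludes that $T$ is maximal monotone. The two points requiring care are the weak semicontinuity and compactness needed to apply the minimax theorem, and the sign bookkeeping in the identity $\hat T=T+(\,\cdot-a,\,\cdot-b)$; both become routine once the perturbing quadratics are given the signs matching the minimization in $y$ and maximization in $z$.
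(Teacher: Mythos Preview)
Your argument via Minty's surjectivity criterion and a quadratic perturbation is correct, but it differs substantially from the route the paper takes (which follows Rockafellar \cite{articlerockafellar}). The paper avoids minimax existence theorems entirely: it introduces the partial Legendre transform
\[
H(y,z^*)=\sup_{z\in Z}\,\bigl\{\langle z^*,z\rangle+K(y,z)\bigr\},
\]
observes that $H$ is convex and lsc on $Y\oplus Z$, and records the correspondence (Lemma~\ref{lemmefond})
\[
(y^*,z^*)\in T(y,z)\quad\Longleftrightarrow\quad (y^*,z)\in\partial H(y,z^*).
\]
Since the map $(y,z,y^*,z^*)\mapsto(y,z^*,y^*,z)$ is an isometric coordinate swap that carries the graph of $T$ onto the graph of $\partial H$, maximality of $T$ is read off directly from the maximal monotonicity of the subdifferential (Proposition~\ref{convmax}).

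Your approach has the virtue of being self-contained once one grants a saddle-point existence theorem of Ekeland--Temam type; the paper's approach is shorter because it reduces to the already-known case of subdifferentials. More importantly for this particular paper, the function $H$ and Lemma~\ref{lemmefond} are not incidental: they are precisely the template reused in Section~3 to build the operator $T$ for the imaging problem, where $K$ fails to be lsc in $u$ and your perturbation-plus-minimax argument would not apply directly, whereas the $H$-construction still yields a convex lsc function and hence a maximal monotone operator. So while your proof is valid for the theorem as stated, the paper's route is the one that carries over to the application.
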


\begin{proof}
 
We just sketch the proof because it will inspire us in the following. The idea is to use the equivalent theorem for convex functions. For this we ``invert'' the operator $T$ in the second variable. Let
\[H(y,z^*)=\sup_{z \in X}  \; \langle z^*, z \rangle + K(y,z) \]

The proof is then based on the following lemma :

\begin{lem}\label{lemmefond}
$H$ is a convex lsc function on $X$ and 
\[(y^*,z^*) \in T(y,z) \Leftrightarrow (y^*,z) \in \partial H(y,z^*) \]
\end{lem}
It is then not too hard to prove that $T$ is maximal.
 
\end{proof}

\section{Study of the Primal-Dual Method}
In this section, unless otherly stated, everything holds for general functionals $J$ of the type (\ref{generalprob}).\\

Before starting the study of the Primal-Dual method, let us  remind some facts about functions with bounded variation and pairings between measures and bounded functions. 

\begin{defin}\label{TV}
 Let $BV(\Omega)$ be the space of functions  $u$ in $L^1$ for which
\[\intomega |Du|:= \sup_{\stackrel{\xi \in \mathcal{C}^1_c(\Omega)}{|\xi|_\infty \leq 1}} \intomega u \dive \xi < +\infty \]
With  the norm $|u|_{BV}= \intomega|Du|+|u|_{L^1}$ it is a Banach space. We note the functional space $BV^2=BV(\Omega)\cap L^2$.
\end{defin}

\begin{prop}
 Let $u \in L^1(\Omega)$ then $u \in BV(\Omega)$ if and only if its distributional derivative $Du$ is a finite Radon measure. Moreover the total variation of $Du$ is equal to $\displaystyle \intomega |Du|$.
\end{prop}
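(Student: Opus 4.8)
The plan is to recognize this as the classical representation characterization of $BV$ and to derive it from the Riesz representation theorem for vector-valued Radon measures, establishing the two implications together with the equality of the two notions of total variation.

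First I would consider the linear functional $L$ defined on $\mathcal{C}^1_c(\Omega;\mathbb{R}^d)$ by $L(\xi)=\intomega u\,\dive\xi$. The assumption $u\in BV(\Omega)$ means precisely that $\intomega|Du|<+\infty$, which translates into the estimate $|L(\xi)|\leq\big(\intomega|Du|\big)\,|\xi|_\infty$ for every admissible field, and by homogeneity for every $\xi\in\mathcal{C}^1_c(\Omega;\mathbb{R}^d)$. Thus $L$ is a bounded linear functional on $\mathcal{C}^1_c$ for the uniform norm, with operator norm at most $\intomega|Du|$.

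Next, using the density of $\mathcal{C}^1_c(\Omega;\mathbb{R}^d)$ in $\mathcal{C}_c(\Omega;\mathbb{R}^d)$ for the sup norm, $L$ extends uniquely to a continuous functional on $\mathcal{C}_c$ with the same norm. The Riesz representation theorem then furnishes a finite $\mathbb{R}^d$-valued Radon measure $\mu$ with $L(\xi)=\int_\Omega\xi\cdot d\mu$ and $|\mu|(\Omega)=\|L\|\leq\intomega|Du|$. Comparing with the definition of the distributional derivative, namely $\intomega u\,\dive\xi=-\int_\Omega\xi\cdot dDu$, identifies $Du=-\mu$, so $Du$ is a finite Radon measure with $|Du|(\Omega)\leq\intomega|Du|$. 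Conversely, if one starts by assuming $Du$ is a finite Radon measure, the same integration-by-parts identity gives at once $\intomega|Du|\leq|Du|(\Omega)<+\infty$, hence $u\in BV(\Omega)$.

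It remains to upgrade the inequality $|Du|(\Omega)\leq\intomega|Du|$ to an equality, which is where the real work lies. The plan is to use the polar decomposition $dDu=\sigma\,d|Du|$, with $|\sigma|=1$ holding $|Du|$-almost everywhere, and to test against fields approximating $-\sigma$. The hard part will be producing, for a given $\varepsilon>0$, a field $\xi\in\mathcal{C}^1_c(\Omega;\mathbb{R}^d)$ with $|\xi|_\infty\leq1$ that is close to $-\sigma$ in $L^1(|Du|)$; this is achieved by combining Lusin's theorem with mollification and a cut-off to preserve both compact support and the constraint $|\xi|_\infty\leq1$. For such $\xi$ one gets $\intomega u\,\dive\xi=-\int_\Omega\xi\cdot\sigma\,d|Du|\to\int_\Omega|\sigma|^2\,d|Du|=|Du|(\Omega)$, so passing to the supremum yields $\intomega|Du|\geq|Du|(\Omega)$ and the two quantities coincide. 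The only subtlety to watch is the behaviour near $\partial\Omega$, controlled precisely by the compact support required in Definition~\ref{TV}.
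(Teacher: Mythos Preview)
Your argument is correct and is precisely the classical proof via the Riesz representation theorem and polar decomposition (as in \cite{amb}, Proposition~3.6, or \cite{giusti}, Theorem~1.9). Note, however, that the paper does not actually prove this proposition: it is stated as a standard fact with a pointer to the references \cite{amb} and \cite{giusti}, so there is no ``paper's own proof'' to compare against---your proof simply supplies what the paper omits.
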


More informations about functions with bounded variation, can be found in the books  \cite{amb} or  \cite{giusti}.\\
 Following Anzellotti \cite{anzel}, we  define $\int_\Omega [\xi,Du]$ which has to be understood as $\intomega  \xi \cdot Du$, for functions $u$ with bounded variation and bounded functions $\xi$ with divergence in $L^2$.

\begin{defin}
\begin{itemize}
	\item Let $X^2=\left\{ \xi \in (L^\infty(\Omega))^d \, / \; \dive \xi \in L^2(\Omega)\right\}$.
	\item  For $(u,\xi) \in BV^2 \times X^2$ we define the distribution $[\xi,Du]$ by 
 \[ \langle [\xi,Du], \varphi \rangle= -\int_\Omega u \varphi \dive (\xi) - \int_\Omega u \,  \xi \cdot \nabla \varphi  \qquad \forall \varphi \in \mathcal{C}^\infty_c(\Omega) \]
\end{itemize}

 \end{defin}
 
 \begin{thm}\emph{\cite{anzel}}
 The distribution $[\xi,Du]$ is a bounded Radon measure on $\Omega$ and if $\nu$ is the outward unit normal to $\Omega$, we have  Green's formula,
 \[ \int_\Omega [\xi,Du]=- \int_\Omega u \dive (\xi) +\int_{\partial \Omega} (\xi \cdot \nu) u\]
  \end{thm}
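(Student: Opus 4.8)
The plan is to obtain both assertions by smooth approximation of $u$, reducing everything to the classical divergence theorem and then passing to the limit; the only genuinely delicate point is the boundary term, which forces us first to make sense of the normal trace of $\xi$. Since $u\in BV^2$ and $\Omega$ is bounded Lipschitz, I would start by producing a sequence $u_n\in\mathcal{C}^\infty(\Omega)\cap W^{1,1}(\Omega)$, obtained by mollification, with $u_n\to u$ in $L^2(\Omega)$, with $\intomega|\nabla u_n|\to\intomega|Du|$ (strict convergence), and — by continuity of the trace operator with respect to strict convergence — with $u_n\to u$ in $L^1(\partial\Omega)$ in the sense of traces. For a fixed $\varphi\in\mathcal{C}^\infty_c(\Omega)$, an elementary integration by parts shows that the distribution $[\xi,Du_n]$ is just the $L^1$ function $\xi\cdot\nabla u_n$, and the defining formula gives $\langle[\xi,Du_n],\varphi\rangle\to\langle[\xi,Du],\varphi\rangle$ because $\dive\xi\in L^2$ and $u_n\to u$ in $L^2$. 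Combining this with the uniform bound $|\langle[\xi,Du_n],\varphi\rangle|\leq|\varphi|_\infty\,|\xi|_\infty\intomega|\nabla u_n|$ and the convergence of the total variations yields $|\langle[\xi,Du],\varphi\rangle|\leq|\varphi|_\infty\,|\xi|_\infty\intomega|Du|$, so that by the Riesz representation theorem $[\xi,Du]$ is a bounded Radon measure whose total variation is controlled by $|\xi|_\infty\intomega|Du|$.

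For Green's formula I would next construct the normal trace of $\xi$. The key observation is that the expression $w\mapsto\intomega w\,\dive\xi+\intomega\xi\cdot\nabla w$, a priori defined for $w\in W^{1,1}(\Omega)\cap L^\infty(\Omega)$, vanishes whenever $w$ has zero trace: approximating such a $w$ by $\mathcal{C}^\infty_c$ functions in $W^{1,1}$ while keeping them uniformly bounded (hence convergent in $L^2$), the identity reduces to the definition of the distributional divergence. Consequently this expression depends only on the trace of $w$, and the a priori estimate $\left|\intomega w\,\dive\xi+\intomega\xi\cdot\nabla w\right|\leq|\xi|_\infty\int_{\partial\Omega}|w|$ produces a function $[\xi\cdot\nu]\in L^\infty(\partial\Omega)$ with $\intomega w\,\dive\xi+\intomega\xi\cdot\nabla w=\int_{\partial\Omega}[\xi\cdot\nu]\,w$; this $[\xi\cdot\nu]$ is precisely the object denoted $\xi\cdot\nu$ in the statement.

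It then remains to pass to the limit. Applying the normal-trace identity to the $u_n$ (after a truncation if $u$ is unbounded) gives $\intomega\xi\cdot\nabla u_n=-\intomega u_n\,\dive\xi+\int_{\partial\Omega}(\xi\cdot\nu)\,u_n$. The right-hand side converges to $-\intomega u\,\dive\xi+\int_{\partial\Omega}(\xi\cdot\nu)\,u$, using $u_n\to u$ in $L^2$ against $\dive\xi\in L^2$ and $u_n\to u$ in $L^1(\partial\Omega)$ against $[\xi\cdot\nu]\in L^\infty$. The left-hand side is the total mass of the measure $[\xi,Du_n]$, and I would identify its limit with $\intomega[\xi,Du]$ by inserting a cutoff $\eta_\delta\in\mathcal{C}^\infty_c(\Omega)$ increasing to $1$: since $[\xi,Du]$ is a finite measure, $\intomega[\xi,Du]=\lim_\delta\langle[\xi,Du],\eta_\delta\rangle$, and the same family of cutoffs controls the difference with $\intomega\xi\cdot\nabla u_n$. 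Matching the two limits gives Green's formula.

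The step I expect to be the main obstacle is the boundary analysis: proving the a priori bound $\left|\intomega w\,\dive\xi+\intomega\xi\cdot\nabla w\right|\leq|\xi|_\infty\int_{\partial\Omega}|w|$ that legitimizes $[\xi\cdot\nu]$, and, in the final limit, guaranteeing that no mass of $[\xi,Du_n]$ escapes to $\partial\Omega$. Both rely on a careful choice of boundary-adapted extensions and cutoffs, built from the distance to $\partial\Omega$ and the Lipschitz charts, whereas the interior estimates and the convergence of the divergence term are routine consequences of $\dive\xi\in L^2$ and of the strict approximation of $u$.
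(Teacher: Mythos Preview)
The paper does not prove this theorem: it is quoted verbatim from Anzellotti \cite{anzel} and used as a black box, so there is no ``paper's own proof'' to compare against. Your proposal is therefore not a reconstruction of something in the paper but an attempt to reprove Anzellotti's result from scratch.

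That said, your outline is essentially the strategy Anzellotti himself follows. The construction of the normal trace $[\xi\cdot\nu]\in L^\infty(\partial\Omega)$ via the functional $w\mapsto\intomega w\,\dive\xi+\intomega\xi\cdot\nabla w$ on $W^{1,1}\cap L^\infty$ is precisely his Theorem~1.2, and the bound $\bigl|\intomega w\,\dive\xi+\intomega\xi\cdot\nabla w\bigr|\le|\xi|_\infty\int_{\partial\Omega}|w|$ is obtained there through Gagliardo's trace theorem: given the boundary datum, one builds an extension whose $W^{1,1}$ norm is concentrated in an arbitrarily thin collar, so that the interior term $\intomega w\,\dive\xi$ becomes negligible and only the gradient term, controlled by $|\xi|_\infty\intomega|\nabla w|$, survives. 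You correctly flag this as the crux. The Radon-measure bound on $[\xi,Du]$ is straightforward once one has the smooth approximation, exactly as you write. For the passage to the limit in Green's formula, Anzellotti avoids your cutoff argument: he chooses the approximants $u_n$ so that their traces on $\partial\Omega$ coincide with the trace of $u$ (this is his Lemma~5.2, also invoked later in the present paper), which kills any boundary discrepancy and makes the ``mass escaping to $\partial\Omega$'' issue disappear; both sides of the smooth Green identity then converge directly. Your cutoff route would also work but is slightly more laborious.
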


We now prove a useful technical lemma.
\begin{prop}\label{supxi}
 Let $u\in BV(\Omega)$ then 
\[\intomega |Du|=\sup_{\stackrel{\xi \in X^2}{|\xi|_\infty \leq 1}} \intomega [\xi,Du]\]
\end{prop}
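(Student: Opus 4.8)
The plan is to prove the two inequalities separately, writing $S := \sup_{\xi \in X^2,\, |\xi|_\infty \le 1}\intomega [\xi,Du]$ for the right-hand side.

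First I would establish $\intomega |Du| \le S$, the easy half. Every field $\xi \in \mathcal{C}^1_c(\Omega)$ belongs to $X^2$, since it is bounded and its divergence is smooth with compact support, hence in $L^2(\Omega)$. For such a $\xi$ the boundary term in Green's formula vanishes (as $\xi$ is compactly supported in $\Omega$), so $\intomega [\xi,Du] = -\intomega u\,\dive \xi$. Replacing $\xi$ by $-\xi$ leaves the constraint $|\xi|_\infty \le 1$ unchanged, so Definition~\ref{TV} can be rewritten as $\intomega |Du| = \sup\{-\intomega u\,\dive\xi : \xi \in \mathcal{C}^1_c(\Omega),\ |\xi|_\infty \le 1\} = \sup\{\intomega [\xi,Du] : \xi \in \mathcal{C}^1_c(\Omega),\ |\xi|_\infty \le 1\}$. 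Since the supremum defining $S$ is taken over the larger class $X^2 \supset \mathcal{C}^1_c(\Omega)$ under the same constraint, this yields $\intomega |Du| \le S$.

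The reverse inequality $S \le \intomega |Du|$ carries the real content. I would rely on the finer part of Anzellotti's construction \cite{anzel}: for $\xi \in X^2$ with $|\xi|_\infty \le 1$, the measure $[\xi,Du]$ is absolutely continuous with respect to $|Du|$, its Radon--Nikodym density being bounded in absolute value by $|\xi|_\infty$; in particular $|[\xi,Du]|(B) \le |\xi|_\infty \int_B |Du|$ for every Borel set $B \subset \Omega$. Taking $B = \Omega$ together with $|\xi|_\infty \le 1$ gives $\intomega [\xi,Du] \le |[\xi,Du]|(\Omega) \le \intomega |Du|$, and passing to the supremum over admissible $\xi$ produces $S \le \intomega |Du|$. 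Combined with the first step this proves the claim.

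The main obstacle is precisely this density bound on $[\xi,Du]$, because fields in $X^2$ are in general neither smooth nor compactly supported and therefore cannot be inserted directly into Definition~\ref{TV}. If one prefers a self-contained argument avoiding that estimate, the alternative is to approximate: pick $u_n \in \mathcal{C}^\infty(\Omega)\cap BV^2$ with $u_n \to u$ strictly, i.e.\ $u_n \to u$ in $L^2$ and $\intomega |\nabla u_n| \to \intomega |Du|$. For smooth $u_n$ the pairing reduces to $\intomega \xi\cdot\nabla u_n$, trivially bounded by $|\xi|_\infty \intomega |\nabla u_n|$; the delicate point is then to justify $\intomega [\xi,Du_n] \to \intomega [\xi,Du]$, which follows from the continuity of the Anzellotti pairing under strict convergence (using that $\dive\xi \in L^2$ controls $\intomega u_n\,\dive\xi$ and that traces converge under strict convergence in Green's formula). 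Either route reduces the statement to the regularity theory of $[\xi,Du]$, which is the heart of the matter.
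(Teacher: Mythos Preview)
Your argument is correct, and the route you take for the inequality $S\le\intomega|Du|$ is genuinely different from the paper's. The paper does not invoke the Radon--Nikodym bound $|[\xi,Du]|\le|\xi|_\infty\,|Du|$ directly; instead it first observes that $\intomega|Du|\ge\sup_{\xi\in\mathcal{C}(\Omega)\cap X^2,\,|\xi|_\infty\le1}\intomega[\xi,Du]$ via the duality description of the total variation over continuous test fields, and then constructs, for each $\xi\in X^2$ with $|\xi|_\infty\le1$, a sequence $\xi_n\in X^2\cap\mathcal{C}(\Omega)$ with $|\xi_n|_\infty\le1$ and $\intomega[\xi_n,Du]\to\intomega[\xi,Du]$, using weak-$*$ convergence of the pairing measures together with a tail cutoff on $\Omega\setminus\Omega_\delta$.

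Your approach is shorter and arguably cleaner: once one accepts Anzellotti's density estimate as a black box, the inequality is a one-line consequence and no approximation of $\xi$ is needed. It is worth noting that the paper's proof in fact \emph{uses} the very same estimate implicitly, when it bounds the tail term $\intomega[\xi_n,Du](1-\eta)-\intomega[\xi,Du](1-\eta)$ by $2|\xi_n|_\infty\int_{\Omega\setminus\Omega_\delta}|Du|+2|\xi|_\infty\int_{\Omega\setminus\Omega_\delta}|Du|$; so the approximation machinery there does not actually avoid the density bound. What the paper's route buys is that it makes explicit the approximation of rough fields in $X^2$ by continuous ones with control on the pairing, a lemma of independent interest reused later in the article; what your route buys is economy. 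Your alternative via strict approximation of $u$ is also valid, though as you note the passage to the limit in the boundary term of Green's formula needs care.
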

\begin{proof}
By the definition of the total variation, 
\[\intomega |Du|\leq \sup_{\stackrel{\xi \in X^2}{|\xi|_\infty \leq 1}} \intomega [\xi,Du]\]
We thus only have to prove the opposite inequality.\\
Let $\mathcal{C}(\Omega)$ be the space of continuous functions on $\Omega$ then by Proposition 1.47 p.41 of the book \cite{amb},
 \begin{align*}
\intomega |Du|&= \sup_{\stackrel{\xi \in \mathcal{C}(\Omega)}{|\xi|_\infty \leq 1}} \intomega \xi\cdot Du\\
&\geq \sup_{\stackrel{\xi \in \mathcal{C}(\Omega)\cap X^2}{|\xi|_\infty \leq 1}} \intomega [\xi,Du]
\end{align*}
In the second inequality, the fact that $\displaystyle \intomega \xi\cdot Du=\intomega [\xi,Du]$ comes from Proposition 2.3 of \cite{anzel}. Let us also note that in the original Proposition 1.47 cited above, the supremum is taken over functions in $\mathcal{C}_c(\Omega)$ but a quick look to the proof shows that it can be enlarge to functions whose  support is not compact.\\

We now want to show that for every $\xi$ in $X^2$ with $|\xi|_\infty \leq 1$, there exists a sequence $ \xi_n$ in $X^2\cap \mathcal{C}(\Omega)$ with $|\xi_n|_\infty \leq 1$ such that $\displaystyle \intomega [\xi_n,Du]$ tends to $\displaystyle \intomega [\xi,Du]$, which would end the proof.\\

By Lemma 2.2 and Proposition 2.1 of \cite{anzel}, for every $\xi \in X^2$ with $|\xi|_\infty \leq 1$, we can find $ \xi_n \in X^2\cap \mathcal{C}(\Omega)$ with $|\xi_n|_\infty \leq 1$ and $[\xi_n,Du]$ tending to $[\xi,Du]$ in the sense of weak convergence of measures.\\
 The final step is now very similar to  the proof  of Theorem 4.2 of \cite{anzel}. \\

 Let $\varepsilon>0$ be given . There exists a number $\delta=\delta(\varepsilon)>0$ such that if we let $\Omega_\delta= \{ x \in \Omega \; |\;  dist (x,\partial \Omega) >\delta \}$
\[\int_{\Omega \backslash \Omega_\delta} |Du| \leq \varepsilon\]
Take $\eta$ a function of $\mathcal{C}_c(\Omega)$ with $\eta=1$ on $\Omega_\delta$  and $|\eta|_\infty\leq 1$, then
\begin{multline*}
\intomega [\xi_n,Du]-\intomega[\xi,Du]=\\
\left[ \intomega [\xi_n,Du]\eta-\intomega[\xi,Du]\eta \right]+\left[ \intomega [\xi_n,Du](1-\eta)-\intomega[\xi,Du](1-\eta) \right]
\end{multline*}
The first term in brackets goes to zero because of the weak convergence of $[\xi_n,Du]$ to $[\xi,Du]$. The second term can be  bounded by
\[2|\xi_n|_\infty \int_{\Omega \backslash \Omega_\delta} |Du|+2|\xi|_\infty \int_{\Omega\backslash \Omega_\delta} |Du| \leq 4\varepsilon\]
This shows the desired result. 
\end{proof}

 The next proposition gives a characterization of the minimizers of the functional $J$.

\begin{prop}\label{EL}
 Let $\displaystyle J(u)=\intomega |Du|+G(u)+\intpartialD |u-\varphi|$ then $u$ is a minimizer of $J$ in $BV^2$ if and only if there exists $\xi \in X^2$ such that
\[ \begin{cases}  \dive (\xi) \in \partial G(u) \\[8pt]
  \displaystyle \int_\Omega |Du| = \int_\Omega [\xi,Du]\\[8pt]
\xi \cdot \nu=0 \textrm{ in } \partial \Omega_N \, \textrm { and } \, (\xi \cdot \nu) \in sign(\varphi-u) \textrm { in } \partial \Omega_D

  \end{cases}\]

\end{prop}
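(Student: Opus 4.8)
The plan is to prove the two implications separately, handling the ``if'' direction by a direct convexity estimate and the ``only if'' direction by convex duality, the latter being where the vector field $\xi$ has to be manufactured.

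For sufficiency I would assume such a $\xi$ is given and show $J(v)\ge J(u)$ for every $v\in BV^2$. Since $\dive\xi\in\partial G(u)$, convexity gives $G(v)-G(u)\ge\intomega \dive(\xi)(v-u)$. Since $|\xi|_\infty\le1$, Proposition \ref{supxi} yields $\intomega|Dv|\ge\intomega[\xi,Dv]$, while the second hypothesis gives $\intomega|Du|=\intomega[\xi,Du]$; subtracting and applying Green's formula, the interior terms $-\intomega(v-u)\dive\xi$ cancel against the $G$-term and one is left with a boundary contribution $\int_{\partial\Omega}(\xi\cdot\nu)(v-u)$. On $\partial\Omega_N$ this vanishes because $\xi\cdot\nu=0$, and on $\partial\Omega_D$ I would use the third hypothesis in the form $-(\xi\cdot\nu)\in sign(u-\varphi)$, i.e. $-(\xi\cdot\nu)$ is a subgradient of $t\mapsto|t-\varphi|$ at $u$, so that $\intpartialD|v-\varphi|-\intpartialD|u-\varphi|\ge-\intpartialD(\xi\cdot\nu)(v-u)$. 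Adding the three estimates produces $J(v)-J(u)\ge0$.

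For necessity the key step is a dual representation of the combined total-variation–plus–Dirichlet functional
\[\Psi(u):=\intomega|Du|+\intpartialD|u-\varphi|=\sup_{\xi\in\mathcal K}\Big(-\intomega u\,\dive\xi+\intpartialD\varphi\,(\xi\cdot\nu)\Big),\]
where $\mathcal K=\{\xi\in X^2: |\xi|_\infty\le1,\ \xi\cdot\nu=0\text{ on }\partial\Omega_N\}$. By Green's formula the bracket equals $\intomega[\xi,Du]-\intpartialD(\xi\cdot\nu)(u-\varphi)$, so the identity amounts to maximizing the interior part (Proposition \ref{supxi}, giving $\intomega|Du|$) and the boundary part (pointwise $\sup_{|a|\le1}-a(u-\varphi)=|u-\varphi|$) simultaneously. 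Since $u$ minimizes $J=\Psi+G$ we have $0\in\partial(\Psi+G)(u)$, and under a qualification condition the sum rule yields $g\in\partial G(u)$ with $-g\in\partial\Psi(u)$.

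Finally I would read off the three conditions from the optimal $\xi$, best guaranteed by a Fenchel--Rockafellar argument showing the dual problem $\sup_{\xi\in\mathcal K}\big[-G^*(\dive\xi)+\intpartialD\varphi(\xi\cdot\nu)\big]$ is attained with no duality gap. Because $\Psi$ is a supremum of the affine maps $u\mapsto-\intomega u\,\dive\xi+\intpartialD\varphi(\xi\cdot\nu)$, an element of $\partial\Psi(u)$ is produced by a $\xi\in\mathcal K$ attaining the supremum, and its slope $-\dive\xi$ must equal $-g$; hence $\dive\xi=g\in\partial G(u)$, the first condition. That the same $\xi$ attains the supremum forces both pieces of the bracket to saturate, giving $\intomega[\xi,Du]=\intomega|Du|$ (second condition) and, pointwise on $\partial\Omega_D$, $-(\xi\cdot\nu)(u-\varphi)=|u-\varphi|$, i.e. $\xi\cdot\nu\in sign(\varphi-u)$; together with $\xi\cdot\nu=0$ on $\partial\Omega_N$ built into $\mathcal K$, this is the third condition. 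The main obstacle is precisely the dual representation of $\Psi$: one must decouple the interior supremum from the normal trace so that a single $\xi\in X^2$ is optimal for both, and ensure the maximizer is attained in $X^2$ rather than in a larger space of measures. I expect this to require the approximation machinery of \cite{anzel} already used in Proposition \ref{supxi}, combined with verifying the qualification condition for the sum rule and the absence of a duality gap.
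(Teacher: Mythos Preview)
Your proposal is sound. Note that the paper does not actually supply a self-contained proof of this proposition: it defers to \cite{andreu} p.143 and to ``the techniques we used in Proposition~\ref{HROFmax} and Proposition~\ref{HROFmaxbis}''. Your argument is very much in the spirit of the latter. The paper's route would compute the conjugate $H^*$ of $H(u,\xi^*)=\intomega|Du+\xi^*|+G(u)+\intpartialD|u-\varphi|$ and then use the Fenchel equality to split the optimality condition $(0,\xi)\in\partial H(u,0)$ into the three saturated inequalities (\ref{firstineq})--(\ref{thirdineq}) with $u^*=0$, $\xi^*=0$, which is exactly the Euler--Lagrange system. Your organization differs only cosmetically: you isolate $\Psi=\intomega|Du|+\intpartialD|u-\varphi|$, give it a dual representation over $\mathcal K$, and then apply the subdifferential sum rule to $\Psi+G$. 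The decoupling of interior and boundary suprema that you flag as the main obstacle is precisely what is carried out in the computation of $H^*$ in Proposition~\ref{HROFmax} (using Anzellotti's Lemma~5.5 to prescribe the trace independently of the interior), so the machinery you anticipate needing is the right one. The qualification condition for the sum rule is harmless since $G$ is assumed continuous on $L^2$; attainment of the dual optimum in $X^2$ then follows from Fenchel--Rockafellar.
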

We do not give the proof of this proposition here since it can be either found in Andreu and al. \cite{andreu} p.143 or derived more directly using the techniques we used in Proposition \ref{HROFmax} and Proposition \ref{HROFmaxbis}. \\

With these few propositions in mind we can turn back to the analysis of the Primal-Dual method. As noticed in the introduction, finding a minimizer of $J$ is equivalent to  finding a saddle point of

\[K(u,\xi) = \intomega [Du,\xi] +G(u)+\intpartialD |u-\varphi|-\ind(\xi)\]

The saddle function $K$ does not fulfill the assumptions of Theorem \ref{maxpointselle} since it is not lsc in $u$. However  staying in the spirit of Lemma \ref{lemmefond}, we set

\begin{align*}
 H(u,\xi^*)     &= \sup_{\stackrel{\xi \in X^2}{|\xi|_\infty \leq 1}}  \langle \xi,\xi^*\rangle +K(u,\xi)\\
		&= \sup_{\stackrel{\xi \in X^2}{|\xi|_\infty \leq 1}}  \langle \xi,\xi^*\rangle +\intomega [Du,\xi]  +G(u)+\intpartialD |u-\varphi|\\
		&= \intomega |Du+\xi^*|+G(u)+\intpartialD |u-\varphi|
\end{align*}

Where the last equality is obtained as in Proposition \ref{supxi}.
The function $H$ is then a convex lsc function on $L^2\times (L^2)^d$ hence $\partial H$ is maximal monotone. We are now able to define a maximal monotone operator $T$ by

\[T(u,\xi)= \left\{ (u^*,\xi^*) \, / \, (u^*, \xi) \in \partial H(u,\xi^*) \right\}\]

In order to compute $\partial H$, which gives the expression of $T$, we use the characterization of the subdifferential 
\[ (u^*,\xi) \in \partial H(u,\xi^*) \quad \Longleftrightarrow \quad \langle u^*,u \rangle + \langle \xi^*,\xi \rangle =H(u,\xi^*)+H^*(u^*,\xi) \]

A first step is thus to determine what $H^*$ is.

\begin{prop}\label{HROFmax}

We have 
\[D( H^* )= \left \{ (u^*,\xi) \, / \,  u^* \in L^2(\Omega) \text{ and } \xi \in X^2 \, , \, \xi \cdot \nu =0 \textrm{ in } \partial \Omega_N \, , \; |\xi|_\infty \leq 1 \right\}\] and

\[H^*(u^*,\xi)=G^*(u^*+\dive (\xi)) -\intpartialD (\xi \cdot \nu) \varphi. \]
\end{prop}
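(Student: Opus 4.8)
The plan is to compute $H^*$ directly from the definition of the convex conjugate on the Hilbert space $L^2(\Omega)\times(L^2(\Omega))^d$,
\[H^*(u^*,\xi)=\sup_{(u,\xi^*)}\left\{\langle u^*,u\rangle+\langle\xi,\xi^*\rangle-H(u,\xi^*)\right\},\]
taking advantage of the fact that $\xi^*$ enters $H$ only through the term $\intomega|Du+\xi^*|$. The single algebraic fact I would use repeatedly is that, by the very definition of this term (exactly as in Proposition \ref{supxi}), for every $\xi\in X^2$ with $|\xi|_\infty\le1$ one has the Fenchel-type inequality $\intomega|Du+\xi^*|\ge\langle\xi,\xi^*\rangle+\intomega[Du,\xi]$, with equality when $u\in H^1$ and $\xi^*=-\nabla u$. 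I would also record that $\intomega|Du+\xi^*|=+\infty$ unless $u\in BV^2$, and that testing against $\xi^*$ of large norm forces $\xi\in X^2$ and $|\xi|_\infty\le1$ on the effective domain.

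For the upper bound I would freeze $(u,\xi^*)$ and apply the inequality above to obtain
\[\langle u^*,u\rangle+\langle\xi,\xi^*\rangle-H(u,\xi^*)\le\langle u^*,u\rangle-\intomega[Du,\xi]-G(u)-\intpartialD|u-\varphi|.\]
Green's formula turns $-\intomega[Du,\xi]$ into $\intomega u\,\dive\xi-\intpartialD(\xi\cdot\nu)u-\int_{\partial \Omega_N}(\xi\cdot\nu)u$. The bulk terms combine and the Fenchel--Young inequality gives $\intomega u(u^*+\dive\xi)-G(u)\le G^*(u^*+\dive\xi)$; on $\partial \Omega_D$ the pointwise estimate $-(\xi\cdot\nu)t-|t-\varphi|\le-(\xi\cdot\nu)\varphi$, valid because $|\xi\cdot\nu|\le|\xi|_\infty\le1$, disposes of the Dirichlet boundary. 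This yields $H^*(u^*,\xi)\le G^*(u^*+\dive\xi)-\intpartialD(\xi\cdot\nu)\varphi$. The remaining Neumann contribution $-\int_{\partial \Omega_N}(\xi\cdot\nu)u$ involves a completely free trace of $u$: choosing $u$ concentrated in a vanishing layer near $\partial \Omega_N$ with boundary value of large modulus and suitable sign drives the supremum to $+\infty$ whenever $\xi\cdot\nu\ne0$ on $\partial \Omega_N$, which pins down the stated domain.

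For the matching lower bound I would exhibit an explicit maximizing sequence. Pick $u_n\in H^1(\Omega)\cap L^2(\Omega)$ (so $Du_n$ has no singular part and $\nabla u_n\in(L^2)^d$) with $\intomega u_n(u^*+\dive\xi)-G(u_n)\to G^*(u^*+\dive\xi)$ and with trace on $\partial \Omega_D$ tending to $\varphi$ in $L^1(\partial \Omega_D)$, and take $\xi^*_n=-\nabla u_n$. Then $\intomega|Du_n+\xi^*_n|=0$, so $\langle\xi,\xi^*_n\rangle=-\intomega\xi\cdot\nabla u_n=\intomega u_n\,\dive\xi-\intpartialD(\xi\cdot\nu)u_n$ by Green's formula together with $\xi\cdot\nu=0$ on $\partial \Omega_N$. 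Since $|\xi\cdot\nu|\le1$ and the trace of $u_n$ tends to $\varphi$ in $L^1$, both $\intpartialD(\xi\cdot\nu)u_n\to\intpartialD(\xi\cdot\nu)\varphi$ and $\intpartialD|u_n-\varphi|\to0$; collecting terms, the quantity under the supremum equals $\intomega u_n(u^*+\dive\xi)-G(u_n)-\intpartialD(\xi\cdot\nu)u_n+o(1)$, which converges to $G^*(u^*+\dive\xi)-\intpartialD(\xi\cdot\nu)\varphi$.

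The main obstacle is the construction of this sequence, i.e. simultaneously near-maximizing the bulk functional $u\mapsto\langle u^*+\dive\xi,u\rangle-G(u)$ and controlling the trace on $\partial \Omega_D$. I would start from an arbitrary $L^2$ near-maximizer, regularize it to $H^1$, and then correct it in a thin inner layer near $\partial \Omega_D$ so as to drive its trace to $\varphi$; the correction is $L^2$-small, so by $L^2$-continuity of $G$ neither the bulk pairing nor $G$ is disturbed in the limit, while the boundary trace is adjusted. This relaxed-Dirichlet argument is the delicate technical point and is of the type carried out by Anzellotti \cite{anzel} and Andreu et al. \cite{andreu}. Note that passing through $H^1$ functions, so that $\xi^*_n=-\nabla u_n$ produces the sharp equality, is precisely what lets me avoid the discrepancy coming from the singular part of $Du$, which would otherwise prevent the partial conjugate in $\xi^*$ from being simply $-\intomega[Du,\cdot]$.
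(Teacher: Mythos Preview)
Your overall strategy---an upper bound via the pointwise Fenchel inequality for $\intomega|Du+\xi^*|$ and a lower bound via the explicit choice $\xi^*_n=-\nabla u_n$---is correct and is essentially a reorganization of the paper's computation. The paper proceeds by first restricting the supremum to smooth $u$ (using Anzellotti's approximation lemma), then taking the inner sup in $\xi^*$, then decoupling the interior from the boundary via functions with prescribed trace and small $L^2$ norm; you do the same work packaged as an upper/lower bound. Both arguments rest on the same Anzellotti-type density results you cite.

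There is one genuine gap in your domain argument. You write that ``testing against $\xi^*$ of large norm forces $\xi\in X^2$ and $|\xi|_\infty\le1$''. Varying $\xi^*$ alone (say with $u=0$) indeed yields $|\xi|_\infty\le1$, but it does \emph{not} give $\dive\xi\in L^2$. The paper obtains this by fixing $u\in H^1$ with $u=0$ on $\partial\Omega_D$, substituting $\xi'=\nabla u+\xi^*$, and after taking the sup in $\xi'$ being left with $-\langle\nabla u,\xi\rangle+\langle u^*,u\rangle\le C+G(u)$; it is then the growth hypothesis $G(u)\le C(1+|u|_2^p)$, combined with the scaling $u\mapsto\lambda u$, that forces $u\mapsto\langle u^*+\dive\xi,u\rangle$ to be $L^2$-bounded and hence $\dive\xi\in L^2$. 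Your upper-bound calculation already presupposes $\xi\in X^2$ (both $[Du,\xi]$ and Green's formula require it), so this step must come first and cannot be absorbed into the $\xi^*$-variation.

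A minor technical point on the lower bound: since $\varphi$ is only assumed in $L^1(\partial\Omega_D)$, you cannot in general find $u_n\in H^1$ with trace exactly $\varphi$; you will need either to approximate $\varphi$ by smoother boundary data first, or---as the paper does---allow $u$ merely in $BV^2\cap\mathcal{C}^\infty$ (so $\nabla u\in L^1$, not $L^2$) and recover the pairing $\langle\xi,\nabla u\rangle$ by a further $L^2$-in-$L^1$ density step. This is exactly the ``beware'' remark in the paper's proof.
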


\begin{proof}
We start by computing the domain of $H^*$.\\
If  $(u^*,\xi) \in D(H^*)$ then there exists a constant $C$ such that for every \\$(u,\xi^*)\in BV^2 \times(L^2)^d$,

\[\langle u^*,u \rangle +\langle \xi^*,\xi \rangle -H(u,\xi^*) \leq C\]

Restraining to  $u \in H^1(\Omega)$ with $u\restD =0$ and $\xi^* \in (L^2)^d$, we find that

\[
 \langle u^*,u)+ \langle \xi^*,\xi \rangle-\intomega |\nabla u+\xi^*|-G(u) \leq C \]  from which 
\[ \langle \nabla u +\xi^*,\xi \rangle- \langle \nabla u,\xi\rangle +\langle u^*,u \rangle-\intomega |\nabla u+\xi^*|-G(u) \leq C
\]

Setting $\xi'=\nabla u +\xi^*$ and  taking the supremum over all $\xi' \in (L^2)^d$ we have that $|\xi|_\infty \leq 1$ and  for all $u \in H^1(\Omega)$ with $u\restD =0$ ,

\[- \langle \nabla u,\xi\rangle + \langle u^*,u \rangle \leq C +G(u)\]

Taking now $\tilde{u}=\lambda u$ with $\lambda$ positive and  reminding the form of $G$, it can be shown letting $\lambda$ tending to infinity, that for every $u \in H^1$ with $u\restD=0$,

\[- \langle \nabla u,\xi\rangle + \langle u^*,u\rangle \leq C |u|_{2} \]

This implies that $u^*+\dive \xi \in L^2$ hence $\dive \xi \in L^2$. Then by Green's formula in $H^1(\dive)$ (see Dautray-Lions \cite{dautraylions} p.205) we  have  $\xi \cdot \nu =0 \textrm{ in } \partial \Omega_N$.\\

Let us now compute $H^*$.\\
Let $(u^*,\xi)\in D(H^*)$,
\[H^*(u^*,\xi)=\sup_{\xi^* \in L^2} \sup_{u\in BV^2} \left\{ \langle  u^*,u\rangle +  \langle \xi^*,\xi\rangle -\int_\Omega |Du+\xi^*| -G(u)-\intpartialD |u-\varphi| \right\} \]

Let $\xi^* \in L^2$ be fixed. Then  by Lemma 5.2 p.316 of Anzellotti's paper \cite{anzel}, for every $u \in BV^2$ there exists $u_n \in \mathcal{C}^\infty \cap BV^2$ such that 

\begin{align*}
u_n & \stackrel{L^2}{\rightarrow} u \, , \, \quad (u_n) \restD =u\restD \qquad \text{ and } \\
 \int_\Omega |Du_n +\xi^*| & \rightarrow \int_\Omega |Du +\xi^*| 
\end{align*}

We can thus restrict the   supremum to functions  $u$ of class $\mathcal{C}^\infty(\Omega)$. We then have 

\begin{align*}
H^*(u^*,\xi) & =\sup_{u\in BV^2 \cap \mathcal{C}^\infty} \sup_{\xi\in L^2} \,\left\{  \langle u^*,u\rangle +  \langle \xi^*,\xi\rangle -\int_\Omega  |Du+\xi^*| -G(u) -\intpartialD |u-\varphi|\right\} \\
&=  \sup_{ u \in BV^2 \cap \mathcal{C}^\infty} \, \left\{ \langle u^*,u\rangle -  \langle \nabla u,\xi\rangle  -G(u) -\intpartialD |u-\varphi|\right\} \\
&=  \sup_{ u \in BV^2 } \, \left\{\langle u^*,u\rangle - \intomega [D u,\xi]  -G(u) -\intpartialD |u-\varphi|\right\} \\
&= \sup_{ u \in BV^2 } \, \left\{\langle u,u^*+\dive \xi\rangle  -G(u) -\intpartialD \left\{ |u-\varphi|+(\xi \cdot \nu) u \right\} \right\}
\end{align*}

Beware that $u\in BV^2 \cap \mathcal{C}^\infty$ implies that $\nabla u \in L^1$ and not  $\nabla u \in L^2$ but the density of $L^2$ in $L^1$ allows us to pass from the first equality to the second. The third equality follows from Lemma 1.8 of \cite{anzel}. We now have to show that we can take separately the supremum in the interior of  $\Omega$ and on the boundary $\partial \Omega_D$. \\

Let $f$ be in $L^1(\partial \Omega)$ and $v$ be in $ L^2(\Omega)$. We want to find $u_\varepsilon\in BV^2$ converging to $v$ in $L^2$ and such that  $(u_\varepsilon)\restD=f$.\\

By Lemma 5.5 of \cite{anzel} there is a  $w_\varepsilon \in W^{1,1}$ with $(w_\varepsilon)\restD =f$ and $|w_\varepsilon|_2\leq \varepsilon$. By density of $\mathcal{C}^\infty_c(\Omega)$ in $L^2$ we can find $v_\varepsilon \in \mathcal{C}^\infty_c(\Omega)$ with $|v_\varepsilon -v|_2 \leq \varepsilon$ We can then take $u_\varepsilon=v_\varepsilon+w_\varepsilon$. \\%(\partial \Omega_D)

This shows that 
\begin{align*}
H^*(u^*,\xi) & = \sup_{u \in L^2(\Omega)} \,\left\{ \langle u,u^*+\dive \xi \rangle  -G(u) \right\} - \inf_{u \in L^1} \intpartialD \left\{ |u-\varphi|+(\xi \cdot \nu) u \right\}\\
&= G^*(u^*+\dive (\xi)) -\intpartialD (\xi \cdot \nu) \varphi
\end{align*}
\end{proof}

We can now compute $T$

\begin{prop}\label{HROFmaxbis}
  Let $(u,\xi) \in BV^2\times X^2$ then, $(u^*,\xi^*) \in T(u,\xi)$ if and only if
  \[ \begin{cases} u^* + \dive (\xi) \in \partial G(u) \\[8pt]
  \displaystyle \int_\Omega |\xi^* +Du| =\langle \xi^*,\xi \rangle + \int_\Omega [\xi,Du]\\[8pt]
\xi \cdot \nu=0 \textrm{ in } \partial \Omega_N \, \textrm { and } \, (\xi \cdot \nu) \in sign(\varphi-u) \textrm { in } \partial \Omega_D

  \end{cases}\]
  \end{prop}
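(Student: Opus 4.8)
The plan is to read the characterization straight off the Fenchel--Young equality that was recalled just before the statement: by definition $(u^*,\xi^*)\in T(u,\xi)$ means $(u^*,\xi)\in\partial H(u,\xi^*)$, which is equivalent to
\[\langle u^*,u\rangle+\langle \xi^*,\xi\rangle=H(u,\xi^*)+H^*(u^*,\xi).\]
First I would substitute the two expressions we already have, namely $H(u,\xi^*)=\intomega|Du+\xi^*|+G(u)+\intpartialD|u-\varphi|$ and, from Proposition \ref{HROFmax}, $H^*(u^*,\xi)=G^*(u^*+\dive(\xi))-\intpartialD(\xi\cdot\nu)\varphi$. Since $H^*$ is finite here, membership in $D(H^*)$ is automatic, and this already delivers $|\xi|_\infty\le1$, $\xi\in X^2$ and $\xi\cdot\nu=0$ on $\partial\Omega_N$, i.e. the first half of the third line of the claim.

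Next I would isolate three elementary inequalities, each valid under the standing hypotheses. The first is Fenchel--Young for $G$, namely $G(u)+G^*(u^*+\dive\xi)\ge\langle u,u^*+\dive\xi\rangle$. The second is $\intomega|Du+\xi^*|\ge\langle\xi^*,\xi\rangle+\intomega[\xi,Du]$, which holds because $|\xi|_\infty\le1$ and, by the supremum representation established in Proposition \ref{supxi} (this is exactly the computation of $H$), $\intomega|Du+\xi^*|$ is the supremum of the right-hand side over the unit ball of $X^2$. The third is the boundary inequality $\intpartialD|u-\varphi|\ge\intpartialD(\xi\cdot\nu)(\varphi-u)$, which holds pointwise on $\partial\Omega_D$ because the normal trace satisfies $|\xi\cdot\nu|\le1$. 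I would then invoke Green's formula in the Anzellotti form, $\intomega[\xi,Du]=-\langle u,\dive\xi\rangle+\intpartialD(\xi\cdot\nu)u$ (the boundary integral reduces to $\partial\Omega_D$ since $\xi\cdot\nu=0$ on $\partial\Omega_N$), to check that summing the three inequalities and cancelling the cross terms $\langle u,\dive\xi\rangle$, $\intomega[\xi,Du]$ and $\intpartialD(\xi\cdot\nu)u$ reproduces precisely the Fenchel--Young equality above, up to the term $-\intpartialD(\xi\cdot\nu)\varphi$.

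The conclusion is then the usual ``sum of nonnegative gaps is zero'' argument: the displayed equality states exactly that the sum of the three gaps vanishes, so each gap vanishes individually. The first equality is equivalent to $u^*+\dive(\xi)\in\partial G(u)$; the second is the middle line of the claim verbatim; and the third, combined with $|\xi\cdot\nu|\le1$, is exactly $(\xi\cdot\nu)\in sign(\varphi-u)$ on $\partial\Omega_D$. Conversely, if the three conditions hold, then all three inequalities are equalities, their sum is the Fenchel--Young equality, and hence $(u^*,\xi^*)\in T(u,\xi)$, so the equivalence goes both ways at once.

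I expect the two delicate points to be inequality (ii) and the bookkeeping. For (ii), one must make sure that for a fixed field $\xi$ in the unit ball of $X^2$ the pairing $\langle\xi^*,\xi\rangle+\intomega[\xi,Du]$ is genuinely dominated by $\intomega|Du+\xi^*|$; this is where the argument behind Proposition \ref{supxi} (and the computation giving $H$) must be reused carefully, together with the fact that the normal trace $\xi\cdot\nu$ is well defined in $L^\infty(\partial\Omega_D)$ with $|\xi\cdot\nu|\le1$, which rests on $\xi\in X^2$, $|\xi|_\infty\le1$ and the trace theory underlying Green's formula. The second subtlety is purely computational: tracking the interior pairing and the two boundary integrals so that they cancel exactly, which is what makes the summed inequality collapse to the Fenchel--Young identity rather than to something weaker.
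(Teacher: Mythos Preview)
Your proposal is correct and follows essentially the same route as the paper: write out the Fenchel--Young equality $\langle u^*,u\rangle+\langle\xi^*,\xi\rangle=H(u,\xi^*)+H^*(u^*,\xi)$, substitute the explicit formulas for $H$ and $H^*$, and observe that the three elementary inequalities you list (Fenchel--Young for $G$, the pairing bound from Proposition~\ref{supxi}, and the pointwise boundary inequality) sum to exactly the right-hand side, so equality forces each gap to vanish. Your write-up is in fact somewhat more explicit than the paper's about the Green's formula bookkeeping and the converse direction, but the argument is the same.
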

 
\begin{proof}
Let us first note that,
\begin{align}
\label{firstineq} G(u)+G^*(u^*+\dive (\xi)) &\geq  \langle u, u^*+\dive (\xi)\rangle\\
 \label{secondineq} \intomega |Du+\xi^*| &\geq \intomega [\xi,Du] +\intomega \xi^* \xi\\
\label{thirdineq} \displaystyle |u-\varphi|& \geq (\xi\cdot \nu)(\varphi-u)
\end{align}
where the second inequality is obtained arguing as in Proposition \ref{supxi}.\\
By definition, $(u^*,\xi^*)\in T(u,\xi)$ if and only if
\begin{align*}
 \langle u,u^*\rangle+ \langle \xi,\xi^*\rangle  = & \, H(u,\xi^*)+H^*(u^*,\xi) \qquad \\
   = &\intomega |Du+\xi^*|+G(u)+\intpartialD |u-\varphi| \\
  & +G^*(u^*+\dive (\xi)) - \intpartialD (\xi \cdot \nu) \varphi 
 \end{align*}
This shows that (\ref{firstineq}), (\ref{secondineq}) and (\ref{thirdineq}) must be equalities which is exactly   

\[ \begin{cases} u^* + \dive (\xi) \in \partial G(u) \\[8pt]
  \displaystyle \int_\Omega |\xi^* +Du| =\langle \xi^*,\xi\rangle + \int_\Omega [\xi,Du]\\[8pt]
 (\xi \cdot \nu) \in sign(\varphi-u) \textrm { in } \partial \Omega_D

 \end{cases}\]

Moreover, $\xi \cdot \nu=0$ in $\partial \Omega_N$ because $(u,\xi)\in D(T)$.
\end{proof}

\begin{remarque} \hspace{2cm} 
\begin{itemize}
		\item The condition $(\xi\cdot\nu) \in sign(\varphi-u)$ in $\partial \Omega_D$ is equivalent to 
\[\intpartialD |u-\varphi|+(\xi\cdot\nu)u=\inf_v \intpartialD |v-\varphi|+(\xi\cdot\nu)v\]
because inequality (\ref{thirdineq}) holds true for every $v$ and is an equality for $u$.
                 \item Whenever it has a meaning, it can be shown that the condition \[\displaystyle \int_\Omega |\xi^* +Du| =\langle \xi^*, \xi\rangle + \int_\Omega [\xi,Du]\] is equivalent to \[\xi^*+Du \in \partial\ind (\xi)\] so that we will not distinguish between these two notations.
		\item This analysis shows why the constraint  $u \in [0,1]$ is hard to  deal with. In fact, it imposes that $\dive(\xi)$ is a measure but not necessarily a  $L^2$ function. It is not easy to give a meaning to $\intomega Du \cdot \xi$ or to $(\xi \cdot \nu)$ on the boundary for such functions. However, when dealing with numerical implementations,  it is better to keep the constraint on $u$.
\end{itemize}

\end{remarque}

We can summarize those results in the following theorem which says that  the Primal-Dual Method is well-posed.

\begin{thm}
 For all $(u_0,\xi_0)\in \dom(T)$, there exists a unique $(u(t),\xi(t))$ such that
\begin{equation}\label{primaldual} \begin{cases} \partial_t u  \in \dive (\xi) -\partial G(u) \\[8pt]
 \partial_t \xi \in Du-\partial\ind (\xi)\\[8pt]
 (\xi \cdot \nu) \in sign(\varphi-u) \textrm { in } \partial \Omega_D \qquad \xi\cdot \nu=0 \textrm{ in } \partial \Omega_N\\[8pt]
(u(0),\xi(0))=(u_0,\xi_0)
\end{cases}\end{equation}

Moreover, the energy $|\dfrac{d^+u}{dt}|^2_2+|\dfrac{d^+\xi}{dt} |^2_2$ is non increasing and if $(\bar{u},\bar{\xi})$ is a saddle point of $K$, ${|u-\bar{u}|^2_2+|\xi-\bar{\xi}|^2_2}$ is also non increasing.
\end{thm}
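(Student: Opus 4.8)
The plan is to read (\ref{primaldual}) as the abstract Cauchy problem $-\frac{d}{dt}(u,\xi)\in T(u,\xi)$ on the Hilbert space $\mathcal H=L^2(\Omega)\times(L^2(\Omega))^d$, where $T$ is the operator of Proposition \ref{HROFmaxbis}, and then to apply Theorem \ref{Cauchy}. To see that the inclusion is exactly the system, I would set $u^*=-\partial_t u$ and $\xi^*=-\partial_t\xi$. By Proposition \ref{HROFmaxbis}, $(u^*,\xi^*)\in T(u,\xi)$ means $u^*+\dive(\xi)\in\partial G(u)$, together with $\intomega|\xi^*+Du|=\langle\xi^*,\xi\rangle+\intomega[\xi,Du]$ (equivalently $\xi^*+Du\in\partial\ind(\xi)$, by the remark following that proposition) and the boundary relations $\xi\cdot\nu=0$ on $\partial\Omega_N$, $(\xi\cdot\nu)\in sign(\varphi-u)$ on $\partial\Omega_D$. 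Substituting $u^*=-\partial_t u$ and $\xi^*=-\partial_t\xi$ reproduces precisely the four lines of (\ref{primaldual}).

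The heart of the proof is the maximal monotonicity of $T$. Since $K$ is not lsc in $u$, Theorem \ref{maxpointselle} cannot be quoted directly; instead I would exploit the partial-inversion structure built into the definition of $T$. We have shown that $H(u,\xi^*)=\intomega|Du+\xi^*|+G(u)+\intpartialD|u-\varphi|$ is convex and lsc on $\mathcal H$, so $\partial H$ is maximal monotone by Proposition \ref{convmax}, and by construction $(u^*,\xi^*)\in T(u,\xi)\Leftrightarrow(u^*,\xi)\in\partial H(u,\xi^*)$. Monotonicity of $T$ follows at once, because exchanging $\xi$ and $\xi^*$ leaves the pairing $\langle u_1^*-u_2^*,u_1-u_2\rangle+\langle\xi_1^*-\xi_2^*,\xi_1-\xi_2\rangle$ unchanged, and the latter is nonnegative by monotonicity of $\partial H$. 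For maximality I would verify Minty's criterion $R(I+T)=\mathcal H$: given $(f,g)\in\mathcal H$, surjectivity of $I+\partial H$ provides $(u,v)$ with $(f-u,g-v)\in\partial H(u,v)$, and then $\xi:=g-v$, $\xi^*:=v$, $u^*:=f-u$ satisfy $(u^*,\xi^*)\in T(u,\xi)$ and $(u,\xi)+(u^*,\xi^*)=(f,g)$. Hence $I+T$ is onto and $T$ is maximal monotone. This is the main obstacle, and it is exactly where the lower semicontinuity of $H$ is indispensable.

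With $T$ maximal monotone and $(u_0,\xi_0)\in\dom(T)$, Theorem \ref{Cauchy} yields a unique Lipschitz trajectory $(u(t),\xi(t))$ solving $-\frac{d}{dt}(u,\xi)\in T(u,\xi)$ with $(u(0),\xi(0))=(u_0,\xi_0)$, which by the first step is the claimed solution of (\ref{primaldual}). The energy assertion is read off the same theorem: the right derivative obeys $-\frac{d^+}{dt}(u,\xi)=T^\circ(u(t),\xi(t))$, whence $|\frac{d^+u}{dt}|_2^2+|\frac{d^+\xi}{dt}|_2^2=|T^\circ(u(t),\xi(t))|_{\mathcal H}^2$, a non-increasing function of $t$.

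Finally, let $(\bar u,\bar\xi)$ be a saddle point of $K$. By the characterization of saddle points, $0\in T(\bar u,\bar\xi)$, so the constant map $(\bar u,\bar\xi)$ is itself a solution of the Cauchy problem. Either the contraction estimate of Theorem \ref{Cauchy}, applied along the semigroup to the pair $(u(t),\xi(t))$ and the stationary solution $(\bar u,\bar\xi)$, or the direct computation $\frac{d}{dt}(|u-\bar u|_2^2+|\xi-\bar\xi|_2^2)=2\langle(\partial_t u,\partial_t\xi),(u-\bar u,\xi-\bar\xi)\rangle\leq0$ --- the sign following from monotonicity of $T$ together with $-(\partial_t u,\partial_t\xi)\in T(u,\xi)$ and $0\in T(\bar u,\bar\xi)$ --- shows that $|u-\bar u|_2^2+|\xi-\bar\xi|_2^2$ is non-increasing.
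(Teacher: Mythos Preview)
Your argument is correct and follows the same route as the paper: the theorem is deduced from Theorem \ref{Cauchy} once $T$ is known to be maximal monotone, and the two monotonicity statements (energy and distance to a saddle point) are read off the general properties of the semigroup. The paper compresses all of this into a single sentence, having already asserted before Proposition \ref{HROFmax} that $T$ is maximal monotone; your explicit verification of maximality via the partial-inversion identity $(u^*,\xi^*)\in T(u,\xi)\Leftrightarrow(u^*,\xi)\in\partial H(u,\xi^*)$ and Minty's surjectivity criterion fills in the step the paper leaves implicit, and is correct as written.
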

\begin{proof}
 The operator $T$ is maximal monotone hence Theorem \ref{Cauchy} applies and gives the result.
\end{proof}

\begin{remarque}
 This theorem also shows that whenever $J$ has a minimizer, $K$ has  saddle points. This is  because stationnary points of the system (\ref{primaldual}) are minimizers of $J$ (verifying the Euler-Lagrange equation for $J$, remind Proposition \ref{EL}).
\end{remarque}

For the Rudin-Osher-Fatemi model, one can show that there is convergence of $u$ to the minimizer of the functional $J$ and obtain \textit{a posteriori} estimates. 
\begin{prop}
Let $\displaystyle G=\frac{\lambda}{2}\intomega (u-f)^2$ and $\partial \Omega_D =\emptyset$. Then if $\bar{u}$ is the minimizer of $J$, every solution of (\ref{primaldual}) converges in $L^2$ to $\bar{u}$. Furthermore, 

\[|u-\bar{u}|_2 \leq \frac{1}{2}\left(\frac{1}{\lambda}|\partial_t u|_2+\sqrt{\frac{|\partial_t u|^2_2}{\lambda^2}+\frac{8|\Omega|^{\frac{1}{2}}}{\lambda}|\partial_t \xi|_2}\right) \]

\end{prop}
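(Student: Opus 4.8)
The plan is to treat the two assertions separately, deriving the \emph{a posteriori} estimate first and then using a refined monotonicity property to prove convergence. Throughout I would use that for the ROF model $\partial G(u)=\lambda(u-f)$ and $\partial\Omega_N=\partial\Omega$, and that $J$, being strongly convex, has a unique minimizer $\bar u$; by the remark following the well-posedness theorem $K$ therefore admits a saddle point $(\bar u,\bar\xi)$ whose first component is exactly $\bar u$. Applying Proposition \ref{EL} to $(\bar u,\bar\xi)$ gives the equilibrium relations $\dive\bar\xi=\lambda(\bar u-f)$, $\intomega|D\bar u|=\intomega[\bar\xi,D\bar u]$ and $\bar\xi\cdot\nu=0$ on $\partial\Omega$.

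For the \emph{a posteriori} estimate, I would unwind the inclusion $-\partial_t u\in\dive\xi-\partial G(u)$ into the pointwise-in-time identity $\partial_t u=\dive(\xi-\bar\xi)-\lambda(u-\bar u)$, obtained by subtracting the equilibrium relation $\dive\bar\xi=\lambda(\bar u-f)$. Testing against $u-\bar u$ in $L^2$ gives
\[\lambda|u-\bar u|_2^2=\langle\dive(\xi-\bar\xi),u-\bar u\rangle-\langle\partial_t u,u-\bar u\rangle.\]
The first term is rewritten by Green's formula (the normal traces vanish) as $-\intomega[\xi-\bar\xi,D(u-\bar u)]$; expanding the four pairings and using $\intomega[\bar\xi,D\bar u]=\intomega|D\bar u|$ together with $\intomega[\xi,D\bar u]\le\intomega|D\bar u|$ leaves $\intomega[\bar\xi-\xi,Du]$ up to a nonpositive term. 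This last quantity is controlled by the dual equation: reading the second line of Proposition \ref{HROFmaxbis} as $Du-\partial_t\xi\in\partial\ind(\xi)$ and inserting the admissible test field $\eta=\bar\xi$ yields $\intomega[\bar\xi-\xi,Du]\le\langle\partial_t\xi,\bar\xi-\xi\rangle\le 2|\Omega|^{1/2}|\partial_t\xi|_2$, since $|\bar\xi-\xi|_\infty\le2$. Combining the bounds gives the quadratic inequality
\[\lambda|u-\bar u|_2^2\le|\partial_t u|_2\,|u-\bar u|_2+2|\Omega|^{1/2}|\partial_t\xi|_2,\]
and solving it for $|u-\bar u|_2$ produces exactly the stated estimate.

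For convergence I would \emph{not} invoke the general asymptotics of Theorem \ref{Cauchy} directly, because the saddle operator $T$ has a skew part under which the velocity need not vanish; the point is to exploit that the ROF fidelity makes $T$ strongly monotone in the $u$-variable. Comparing two elements of the graph of $T$ and repeating the computation above — using the monotonicity of $\partial\ind$ and cancelling the cross term $\intomega[\xi_1-\xi_2,D(u_1-u_2)]$ against $\langle\dive(\xi_1-\xi_2),u_1-u_2\rangle$ via Green's formula — I would establish
\[\langle g_1-g_2,(u_1,\xi_1)-(u_2,\xi_2)\rangle\ge\lambda|u_1-u_2|_2^2,\qquad g_i\in T(u_i,\xi_i).\]
Applying this with the flow and the equilibrium $(\bar u,\bar\xi)$ (for which $0\in T(\bar u,\bar\xi)$) gives $-\tfrac12\frac{d}{dt}\bigl(|u-\bar u|_2^2+|\xi-\bar\xi|_2^2\bigr)\ge\lambda|u-\bar u|_2^2$, hence after integration $\int_0^{\infty}|u(t)-\bar u|_2^2\,dt<\infty$.

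Finally I would upgrade this integrated decay to pointwise convergence. Theorem \ref{Cauchy} gives $|\partial_t u|_2\le|(\partial_t u,\partial_t\xi)|\le|T^\circ(u_0,\xi_0)|$ and $|u-\bar u|_2$ is bounded, so $t\mapsto|u(t)-\bar u|_2^2$ has bounded derivative and is uniformly continuous; a nonnegative, uniformly continuous, integrable function on $[0,\infty)$ must tend to $0$, giving $u(t)\to\bar u$ in $L^2$. The main obstacle is precisely this convergence step: the naive monotone-flow argument is insufficient, so one must isolate the partial strong monotonicity coming from $G$ and then perform the $L^1$-to-pointwise upgrade. By comparison the \emph{a posteriori} estimate, once the Anzellotti pairings are handled as in Proposition \ref{supxi}, is essentially a bookkeeping of the two Euler--Lagrange relations.
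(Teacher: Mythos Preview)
Your proof is correct and follows essentially the same approach as the paper: both derive the key differential inequality $\tfrac{1}{2}\frac{d}{dt}\bigl(|u-\bar u|_2^2+|\xi-\bar\xi|_2^2\bigr)\le-\lambda|u-\bar u|_2^2$ from the partial strong monotonicity of $T$ and then pass to pointwise decay via the Lipschitz continuity of $t\mapsto|u-\bar u|_2^2$ (you phrase this as a Barbalat-type lemma, the paper as an explicit oscillation/contradiction argument), and both obtain the \emph{a posteriori} bound by testing $\partial_t u=\dive(\xi-\bar\xi)-\lambda(u-\bar u)$ against $u-\bar u$ and controlling the remaining pairing $\intomega[\bar\xi-\xi,Du]$ by $2|\Omega|^{1/2}|\partial_t\xi|_2$. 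The only cosmetic difference is that you bound this last term directly from the variational inequality for $\partial\ind(\xi)$ with competitor $\bar\xi$, whereas the paper passes through $\intomega|Du|-\intomega[\xi,Du]$ via the equality $\intomega|Du-\partial_t\xi|=\intomega[\xi,Du]-\langle\partial_t\xi,\xi\rangle$; both routes give the same constant.
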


\begin{proof}
  Let $(\bar{u},\bar{\xi})$ be such that $0\in T(\bar{u}, \bar{\xi})$. Let $e(t)=|u(t)-\bar{u}|_2^2$ and \\ $ g(t)=|\xi(t)-\bar{\xi}|_2^2$. We  show that 
 \begin{equation}\label{gronwall} 
\frac{1}{2}  (e+g)' \leq - \lambda e \end{equation}
Indeed, by definition of the flow,
\begin{align*}
 \intomega [\xi,Du]-\langle \xi , \partial_t \xi \rangle & \geq \intomega [\bar{\xi},Du]-\langle \bar{\xi}, \partial_t \xi\rangle \qquad \textrm{ and } \\
 \intomega [\bar{\xi},D\bar{u}]-\langle \bar{\xi} , \partial_t \bar{\xi}\rangle & \geq \intomega [{\xi},D\bar{u}]-\langle {\xi}, \partial_t \bar{\xi}\rangle 
\end{align*}
Summing these two we find,
\[
\intomega [\xi-\bar{\xi},D(u-\bar{u})] \geq \langle \xi-\bar{\xi}, \partial_t \xi -\partial_t \bar{\xi} \rangle  
\]
We thus have
\begin{align*}
 \frac{1}{2}  (e+g)'&=\langle u-\bar{u}, \partial_t u -\partial_t \bar{u} \rangle +\langle \xi-\bar{\xi}, \partial_t \xi -\partial_t \bar{\xi} \rangle\\
&\leq \langle u-\bar{u}, \dive (\xi-\bar{\xi})-\lambda(u-\bar{u})\rangle+\intomega [\xi-\bar{\xi},D(u-\bar{u})]\\
&=-\lambda e
\end{align*}

 The functions $e$ and $g$ are Lipschitz continuous. Let $L$ be the  Lipschitz constant of $e$ and let  $h=e+g$. \\
 
Let us show by contradiction that $e$ tends to zero when $t$ tends to infinity.\\

 Suppose that there exists $\alpha>0$ and $T>0$ such that $e\geq \alpha$ for all $t>T$, then we would have $h' \leq -\lambda \alpha$ and   $h$ would tend to minus infinity which is impossible by positivity of $h$. Hence 
 \[ \forall \alpha>0 \; \forall T>0 \; \exists t\geq T \qquad e(t) \leq \alpha \]
 
 Suppose now the existence  of  $\varepsilon >0$ such that for all $T\geq 0$ there exists $t\geq T$ with $e(t) \geq \varepsilon$. \\
 By continuity of $e$, there exists a sequence $(t_n)_{n\in \mathbb{N}}$ with $\displaystyle \lim_{n \rightarrow +\infty} t_n=+\infty $ such that 
 \[ e(t_{2n})=\frac{\varepsilon}{2} \qquad e(t_{2n+1})=\varepsilon \]
 
 Moreover, on  $[t_{2n-1}, t_{2n}]$, we have  $e(t) \geq \frac{\varepsilon}{2}$. We then find that 
 \begin{align*}
 |e(t_{2n})-e(t_{2n-1})| & \leq L(t_{2n} -t_{2n-1}) \quad \text{ so} \\
 \frac{\varepsilon}{2L} & \leq t_{2n}-t_{2n-1}
 \end{align*}
 
 From which we see that,
 \begin{align*}
 h(t_{2n+2})& = h(t_{2n+1})+\int_{t_{2n+1}}^{t_{2n+2}} h'(t) \, dt\\
 & \leq h(t_{2n+1})-\varepsilon \lambda (t_{2n+2}-t_{2n+1})\\
 & \leq h(t_{2n})-\frac{\lambda \varepsilon^2}{2L}
 \end{align*}
 
 This shows that $\displaystyle \lim_{t\rightarrow + \infty} e(t)=0$.\\

We now prove the \textit{a posteriori} error estimate.\\

 We have that
\begin{align*}
 u&=f+\frac{1}{\lambda}(\dive \xi-\partial_t u)\\
\bar{u}&=f+\frac{1}{\lambda}\dive \bar{\xi}
\end{align*}
 Which leads to
\begin{align*}
 |u-\bar{u}|^2_2&=\frac{1}{\lambda}\langle \dive(\xi-\bar{\xi})-\partial_t u,u-\bar{u}\rangle\\
 		&=\frac{1}{\lambda}\left[\langle\dive(\xi-\bar{\xi}),u-\bar{u}\rangle-\langle \partial_t u,u-\bar{u}\rangle\right]\\
 		&=\frac{1}{\lambda}\left[-\langle \xi-\bar{\xi},Du-D\bar{u}\rangle-\langle \partial_t u,u-\bar{u}\rangle\right]\\
 		&\leq \frac{1}{\lambda}\left[\intomega |Du|-\intomega [\xi,Du]+|\partial_t u|_2|u-\bar{u}|_2 \right]
\end{align*}

 Where the last inequality follows from $\displaystyle \intomega [\bar{\xi},Du]\leq \intomega |Du|$ and \\$\displaystyle \intomega \bar{\xi} \cdot D\bar{u}=\intomega |D\bar{u}| \geq0$.\\
Studying the inequality $X^2 \leq A+B X$, we can deduce that  
\[|u-\bar{u}|_2\leq \frac{1}{2}\left(\frac{1}{\lambda}|\partial_t u|_2+\sqrt{\frac{|\partial_t u|^2_2}{\lambda^2}+\frac{4}{\lambda}(\intomega|Du|-\intomega [\xi,Du])}\right)\]

The estimate follows from the fact that
\begin{align*}
\intomega |-\partial_t \xi+Du|		&=\intomega [\xi,Du]-\intomega \partial_t \xi \cdot \xi \quad \textrm{ thus }\\
\intomega|Du|-\intomega |\partial_t \xi| &\leq  \intomega [\xi,Du]-\intomega \partial_t \xi \cdot \xi \quad \textrm{ hence }\\
\intomega|Du|-\intomega [\xi,Du]&\leq 2\intomega|\partial_t \xi| \leq 2|\Omega|^{\frac{1}{2}}|\partial_t \xi|_2
\end{align*}

\end{proof}

Following the same lines, we can show \textit{a posteriori} error estimates for general finite difference scheme. Indeed if  $\gradh$ is any discretization of the gradient and if $\divh$ is defined as $-(\gradh)^*$, the associated algorithm is

\begin{equation}\label{PD}\begin{cases}
\xi^n=P_{B(0,1)}(\xi^{n-1}+\delta \tau^n \gradh u^{n-1})\\[8pt]
   u^n=u^{n-1}+\delta t^n (\divh \xi^{n}-\lambda(u^{n-1}-f))
  \end{cases}\end{equation}
Where $P_{B(0,1)}(\xi)_{i,j}=\dfrac{\xi_{i,j}}{\max(|\xi_{i,j}|,1)}$ is the componentwise projection of $\xi$ on the unit ball. This algorithm is exactly the one proposed by Chan and Zhu in \cite{zhuchan}. We can associate to this system  a discrete energy, 
\[J_h(u)= \sum_{i,j} |\gradh u|_{i,j} +\frac{\lambda}{2}\sum_{i,j}|u_{i,j}-f_{i,j}|^2\]

The algorithm (\ref{PD}) could have been directly derived from this discrete energy using the method of Chan and Zhu \cite{zhuchan} (which is just the discrete counterpart of our continuous method). Hence, the next proposition gives a stopping criterion  for their algorithm.

\begin{prop}\label{posterioriROF}
Let $N\times M$ be the size of the discretization grid and  $\bar{u}$ be the minimizer of $J_h$ then
 \[|u^n-\bar{u}|_2 \leq \frac{1}{2}\left(\frac{1}{\lambda}|\partial_t u^n|_2+\sqrt{\frac{|\partial_t u^n|^2_2}{\lambda^2}+\frac{8\sqrt{N\times M}}{\lambda}|\xi^{n}_t|_2}\right) \]
Where $\displaystyle \partial_t u^n=\frac{u^{n+1}-u^n}{\delta t^{n+1}}$ and $\displaystyle \partial_t \xi^{n}=\frac{\xi^{n+1}-\xi^{n}}{\delta \tau^{n+1}}$.
\end{prop}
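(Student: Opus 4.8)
The plan is to transcribe, line by line, the a posteriori estimate just proved for the continuous flow into the finite-dimensional setting. The discrete analysis is in fact cleaner: since $\divh = -(\gradh)^*$, the summation by parts $\langle \divh \eta, v\rangle = -\langle \eta, \gradh v\rangle$ holds with no boundary contribution, and no Anzellotti pairing is needed because every object is a genuine $\ell^2$ vector, so the measure-theoretic subtleties of the continuous proof disappear.

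First I would record the discrete optimality system for the minimizer $\bar u$ of $J_h$. As in the finite-dimensional counterpart of Proposition \ref{EL}, standard convex duality yields a dual certificate $\bar\xi$ with $|\bar\xi|_\infty \le 1$ such that $\divh\bar\xi = \lambda(\bar u - f)$ and $\langle \bar\xi, \gradh\bar u\rangle = \sum_{i,j}|\gradh\bar u|_{i,j}$; in particular $\bar u = f + \frac{1}{\lambda}\divh\bar\xi$. From the second line of (\ref{PD}) I read off $\partial_t u^n = \divh\xi^{n+1} - \lambda(u^n - f)$, that is $u^n = f + \frac{1}{\lambda}(\divh\xi^{n+1} - \partial_t u^n)$. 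Subtracting the two identities, pairing with $u^n-\bar u$, and summing by parts gives
\[\lambda\,|u^n - \bar u|_2^2 = -\langle \xi^{n+1} - \bar\xi,\, \gradh(u^n - \bar u)\rangle - \langle \partial_t u^n,\, u^n - \bar u\rangle.\]
Expanding the first term and using $|\xi^{n+1}|_\infty \le 1$, $|\bar\xi|_\infty \le 1$ together with the certificate identity $\langle\bar\xi,\gradh\bar u\rangle = \sum_{i,j}|\gradh\bar u|_{i,j}$, the two cross terms carrying $\gradh\bar u$ combine to a nonpositive quantity and I am left, exactly as in the continuous computation, with the bound
\[\lambda\,|u^n-\bar u|_2^2 \le \sum_{i,j}|\gradh u^n|_{i,j} - \langle\xi^{n+1}, \gradh u^n\rangle + |\partial_t u^n|_2\,|u^n - \bar u|_2.\]

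Solving the quadratic inequality $X^2 \le A + BX$ in $X=|u^n-\bar u|_2$ then reproduces the announced formula, provided the residual gap $\sum_{i,j}|\gradh u^n|_{i,j} - \langle\xi^{n+1},\gradh u^n\rangle$ is controlled by $2\sqrt{N\times M}\,|\partial_t\xi^n|_2$. This last bound is where the projection step enters. Writing $\xi^{n+1} = P_{B(0,1)}(\xi^n + \delta\tau^{n+1}\gradh u^n)$, the variational characterization of the projection on $B(0,1)$ reads $\langle \gradh u^n - \partial_t\xi^n,\, \zeta - \xi^{n+1}\rangle \le 0$ for every $\zeta$ with $|\zeta|_\infty \le 1$, where $\partial_t\xi^n = (\xi^{n+1}-\xi^n)/\delta\tau^{n+1}$. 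Taking the supremum over such $\zeta$ turns this into the discrete analogue of the subgradient identity used continuously, namely $\langle \gradh u^n - \partial_t\xi^n,\, \xi^{n+1}\rangle = \sum_{i,j}|\gradh u^n - \partial_t\xi^n|_{i,j}$. A triangle inequality then yields $\sum_{i,j}|\gradh u^n|_{i,j} - \langle\xi^{n+1},\gradh u^n\rangle \le 2\sum_{i,j}|\partial_t\xi^n|_{i,j}$, and a Cauchy--Schwarz over the $N\times M$ pixels converts the $\ell^1$ sum into $\sqrt{N\times M}\,|\partial_t\xi^n|_2$, producing the factor $8\sqrt{N\times M}/\lambda$ under the square root.

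I expect no serious analytic obstacle, since the finite-dimensional setting removes every difficulty handled by the Anzellotti machinery. The only real care is bookkeeping: one must check that the projection inequality produces the dual certificate at the index $\xi^{n+1}$ that matches the explicit $u$-update, so that the mixed forward differences $\partial_t u^n$ and $\partial_t\xi^n$ line up as in the statement, and one must track the grid constant $\sqrt{N\times M}$ in the place occupied by $|\Omega|^{1/2}$ in the continuous estimate.
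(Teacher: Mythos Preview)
Your proposal is correct and follows essentially the same route as the paper's appendix proof: derive $u^n=f+\frac{1}{\lambda}(\divh\xi^{n+1}-\partial_t u^n)$ and $\bar u=f+\frac{1}{\lambda}\divh\bar\xi$, subtract, integrate by parts via $\divh=-(\gradh)^*$, use the projection characterization of $\xi^{n+1}$, and solve the resulting quadratic. The only cosmetic difference is that the paper plugs the specific test point $\bar\xi$ into the projection inequality and bounds $\langle\partial_t\xi^n,\bar\xi-\xi^{n+1}\rangle$ directly by Cauchy--Schwarz, whereas you take the supremum over all admissible $\zeta$ to obtain the identity $\langle\gradh u^n-\partial_t\xi^n,\xi^{n+1}\rangle=\sum_{i,j}|\gradh u^n-\partial_t\xi^n|_{i,j}$ and then apply the triangle inequality, exactly mirroring the continuous argument; both variants give the same constant $2\sqrt{N\times M}$.
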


The proof of this discrete estimate is almost the same as for the continuous one. We give it in the appendix.\\
\begin{remarque}
 In opposition to the continuous framework where we were able to prove a convergence result,  no fully satisfactory statement is known in the discrete framework. For some partial results we refer to  Esser and al. \cite{esser}  and to Chambolle and Pock \cite{chambpock}. These works mainly focus on slight modifications of the Primal-Dual algorithm (\ref{PD}) but they also show that in some restricted cases the algorithm of Chan and Zhu converges.
\end{remarque}

For the general problem, there is no uniqueness for the minimizer (for example in the segmentation problem) and hence convergence may not occur or be hard to prove. Indeed, even when uniqueness holds, we can have non vanishing oscillations. For example in the simpler  one dimensional problem  
\[ \min_{u\in BV([0, 1])} \int_0^1 |u'|\]
the unique minimizer is $u=0$ but $u(t,x)=\frac{1}{2} \cos(\pi x)\sin(\pi t)$ and \\$\xi(t,x)=\frac{1}{2}\sin(\pi x) \cos( \pi t)$ gives a solution to the associated  PDE system which does not converge to a saddle point. In this example, the energy is constant hence not converging to zero. We can however show general \textit{a posteriori} estimates for the energy.

\begin{prop}\label{estimateAT}
 For every saddle point $(\bar{u},\bar{\xi})$  and every $(u_0,\xi_0)$, the solution $(u(t),\xi(t))$ of (\ref{primaldual}) satisfies

\[|J(u)-J(\bar{u})|\leq \left(\sqrt{|u_0-\bar{u}|^2_2+|\xi_0-\bar{\xi}|^2_2} \right)|\partial_t u|_2+2|\Omega|^{\frac{1}{2}}|\partial_t \xi|_2\] 
\end{prop}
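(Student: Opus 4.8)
The plan is to bound the nonnegative quantity $J(u)-J(\bar u)$ (nonnegative because $\bar u$ minimizes $J$, so that the absolute value may be dropped) by inserting the current dual variable $\xi=\xi(t)$ and splitting
\[ J(u)-J(\bar u)=\bigl[J(u)-K(u,\xi)\bigr]+\bigl[K(u,\xi)-K(\bar u,\bar\xi)\bigr], \]
using that $J(\bar u)=K(\bar u,\bar\xi)$ at the saddle point. Along the flow $|\xi|_\infty\le 1$, so $\ind(\xi)=0$ and the first bracket equals $\intomega|Du|-\intomega[Du,\xi]$, which is exactly the gap estimated at the end of the ROF proof: the flow relation $Du-\partial_t\xi\in\partial\ind(\xi)$ gives $\intomega|Du-\partial_t\xi|=\intomega[\xi,Du]-\intomega\partial_t\xi\cdot\xi$, and combining this with the reverse triangle inequality and $|\xi|_\infty\le1$ yields $\intomega|Du|-\intomega[Du,\xi]\le 2\intomega|\partial_t\xi|\le 2|\Omega|^{1/2}|\partial_t\xi|_2$. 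This produces the second term of the claimed bound.

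For the second bracket I would first use the saddle point inequality $K(\bar u,\xi)\le K(\bar u,\bar\xi)$ to reduce to estimating $K(u,\xi)-K(\bar u,\xi)$, and then exploit convexity of $K(\cdot,\xi)$ together with the flow. Writing out $K(u,\xi)-K(\bar u,\xi)$ term by term, the total variation pairing is handled by Green's formula, which (since $\xi\cdot\nu=0$ on $\partial\Omega_N$) gives $\intomega[D(u-\bar u),\xi]=-\intomega(u-\bar u)\dive\xi+\intpartialD(\xi\cdot\nu)(u-\bar u)$; the term $G(u)-G(\bar u)$ is bounded using $\dive\xi-\partial_t u\in\partial G(u)$ from the flow, namely $G(u)-G(\bar u)\le\langle\dive\xi-\partial_t u,u-\bar u\rangle$; and the Dirichlet contribution $\intpartialD(|u-\varphi|-|\bar u-\varphi|)+\intpartialD(\xi\cdot\nu)(u-\bar u)$ is $\le0$ by the first remark following Proposition \ref{HROFmaxbis}, since $(\xi\cdot\nu)\in sign(\varphi-u)$ means $u$ minimizes $\intpartialD|v-\varphi|+(\xi\cdot\nu)v$. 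Adding these, the two occurrences of $\intomega(u-\bar u)\dive\xi$ cancel and everything collapses to $K(u,\xi)-K(\bar u,\xi)\le-\langle\partial_t u,u-\bar u\rangle\le|\partial_t u|_2\,|u-\bar u|_2$.

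Finally I would control $|u-\bar u|_2$ using that $(\bar u,\bar\xi)$ is a stationary solution of (\ref{primaldual}), so by the contraction property in Theorem \ref{Cauchy} (equivalently the last assertion of the well-posedness theorem) the quantity $|u-\bar u|_2^2+|\xi-\bar\xi|_2^2$ is non-increasing, hence $|u-\bar u|_2\le\sqrt{|u_0-\bar u|_2^2+|\xi_0-\bar\xi|_2^2}$. Combining the two brackets then gives the stated estimate. The main obstacle is the careful bookkeeping in the second paragraph: identifying the correct subgradient of $G$ from the flow, applying Anzellotti's Green formula to the pairing $[D(u-\bar u),\xi]$ with the mixed Neumann/Dirichlet boundary data, and checking that the sign condition on $\xi\cdot\nu$ makes the boundary term drop out so that the interior divergence terms cancel exactly.
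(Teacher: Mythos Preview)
Your proof is correct and follows essentially the same route as the paper. The paper does not explicitly introduce the splitting via $K$, but works directly with the components of $J(u)-J(\bar u)$; unraveling your saddle-point inequality $K(\bar u,\xi)\le K(\bar u,\bar\xi)=J(\bar u)$ is exactly the paper's use of $\intomega[\xi,D\bar u]\le\intomega|D\bar u|$, and the remaining ingredients (the gap bound $\intomega|Du|-\intomega[\xi,Du]\le 2|\Omega|^{1/2}|\partial_t\xi|_2$, Green's formula, the boundary minimality from the remark after Proposition~\ref{HROFmaxbis}, the subgradient inequality for $G$, and the contraction bound on $|u-\bar u|_2$) are identical in both arguments.
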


\begin{proof}
 
Let $(\bar{u},\bar{\xi})$ be a saddle point and  $(u(t),\xi(t))$ be a solution of (\ref{primaldual}).
 \[J(u)-J(\bar{u})=\intomega |Du|+\intpartialD |u-\varphi|-\intomega |D\bar{u}|-\intpartialD |\bar{u}-\varphi|+G(u)-G(\bar{u})\]

By definition of the operator $T$ we have
\begin{align*}
\intomega [\xi,Du]-\intomega \partial_t \xi \cdot \xi=& \intomega |Du-\partial_t \xi|\\
 		   \geq & \intomega |Du|-\intomega|\partial_t \xi| 
\end{align*}

This shows that 
\begin{equation}\label{ineq1}\intomega |Du| \leq \intomega [\xi,Du]+2\intomega |\partial_t \xi| \end{equation}
On the other hand,
 \[ \intomega [\xi,Du]+\intpartialD |u-\varphi|=-\intomega u \dive \xi +\intpartialD \left\{(\xi\cdot\nu)u+|u-\varphi|\right\} \]
Applying $\displaystyle \intpartialD \left\{(\xi\cdot\nu)u+|u-\varphi|\right\}=\inf_v \intpartialD \left\{(\xi\cdot\nu)v+|v-\varphi|\right\}$ (remember the Remarks after Proposition \ref{HROFmaxbis}) to $v=\bar{u}$ we have
\begin{align*}
 \intomega [\xi,Du]+\intpartialD |u-\varphi|-\intpartialD |\bar{u}-\varphi|&\leq -\intomega u\dive \xi + \intpartialD (\xi \cdot \nu)\bar{u}\\
		&= -\intomega u\dive \xi +\intomega \bar{u}\dive \xi+\intomega [\xi, D\bar{u}]\\
		&= \intomega (\bar{u}-u) \dive \xi +\intomega [\xi, D\bar{u}]
\end{align*}
This and (\ref{ineq1}) show that
\[J(u)-J(\bar{u}) \leq \intomega (\bar{u}-u) \dive \xi +\intomega [\xi, D\bar{u}]+2\intomega |\partial_t \xi|-\intomega |D\bar{u}|+G(u)-G(\bar{u})\]
If we now use the definition of the subgradient to get
\[ G(u)-G(\bar{u})\leq \langle \dive (\xi) -\partial_t u,u-\bar{u}\rangle \]
 we find with Cauchy-Schwarz's inequality,

\begin{align*}
 J(u)-J(\bar{u})&\leq 2|\Omega|^{\frac{1}{2}}|\partial_t \xi|_2+\intomega (\bar{u}-u)\partial_t u+\intomega [\xi, D\bar{u}]-\intomega |D\bar{u}|\\
		&\leq 2|\Omega|^{\frac{1}{2}}|\partial_t \xi|_2+|\bar{u}-u|_2 |\partial_t u|_2
\end{align*}

Which gives the estimate reminding that $\sqrt{|u-\bar{u}|^2_2+|\xi-\bar{\xi}|^2_2}$ is  non increasing.
\end{proof}
\begin{remarque} \hspace{2cm}\\
 Supported by numerical evidence, we can conjecture that whenever the constraint on $\xi$ is saturated somewhere,  convergence of $u$ occurs. It might however be also necessary to add the  constraint $u \in [0,1]$ in order to have this convergence.
\end{remarque}

Considering a finite difference scheme, just as for the Rudin-Osher-Fatemi model, we can define a discrete energy $J_h$ and show the corresponding \textit{a posteriori} estimate.

\begin{prop}
 If $\bar{u}$ is a minimizer of $J_h$ and $(u^n,\xi^n)$ is defined by 

\[\begin{cases}
\xi^n=P_{B(0,1)}(\xi^{n-1}+\delta \tau^n \gradh u^{n-1})\\[8pt]
   u^n=u^{n-1}+\delta t^n( \divh \xi^{n}-p^n)

  \end{cases}\]
with $p^n \in \partial G^h(u^{n-1})$ then
\[|J_h(u^n)-J_h(\bar{u})|\leq 2\sqrt{N\times M}|\partial_t \xi^{n}|+|\partial_t u^{n}||u^{n-1}-\bar{u}|\]

\end{prop}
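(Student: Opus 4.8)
The plan is to reproduce the proof of Proposition \ref{estimateAT} almost verbatim in the discrete setting, with two systematic replacements: Green's formula becomes the discrete summation by parts $\langle\gradh u,\xi\rangle=-\langle u,\divh\xi\rangle$ coming from $\divh=-(\gradh)^*$, and the subdifferential of $\ind$ becomes the variational (obtuse--angle / normal cone) characterization of the Euclidean projection $P_{B(0,1)}$ onto the unit ball. Since $\bar u$ minimizes $J_h$ we have $J_h(u^n)\ge J_h(\bar u)$, so the absolute value drops and it suffices to bound from above
\[J_h(u^n)-J_h(\bar u)=\Big[\textstyle\sum_{i,j}|\gradh u^n|_{i,j}-\sum_{i,j}|\gradh\bar u|_{i,j}\Big]+\big[G^h(u^n)-G^h(\bar u)\big],\]
splitting the energy into its total variation part and its $G^h$ part exactly as in the continuous case.

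First I would establish the discrete analogue of inequality (\ref{ineq1}). The projection step $\xi^n=P_{B(0,1)}(\xi^{n-1}+\delta\tau^n\gradh u^{n-1})$ says that $\xi^n$ is the point of the unit ball nearest to $\xi^{n-1}+\delta\tau^n\gradh u^{n-1}$, so the normal cone inequality reads $\langle\gradh u^{n-1}-\partial_t\xi^{n-1},\eta-\xi^n\rangle\le0$ for every grid field $\eta$ with $|\eta|\le1$. Taking the componentwise supremum over such $\eta$ turns the left factor into $\sum_{i,j}|\gradh u^{n-1}-\partial_t\xi^{n-1}|_{i,j}$; the reverse triangle inequality together with $|\xi^n|\le1$ then yields, at the relevant time level, $\sum_{i,j}|\gradh u|_{i,j}\le\langle\gradh u,\xi^n\rangle+2\sum_{i,j}|\partial_t\xi|_{i,j}$. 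This is precisely the step that, in the continuous proof, used the membership $Du-\partial_t\xi\in\partial\ind(\xi)$.

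Next I would feed in the descent equation. Since $p^n\in\partial G^h(u^{n-1})$ and the update gives $p^n=\divh\xi^n-\partial_t u$, the subgradient inequality reads $G^h(u^{n-1})-G^h(\bar u)\le\langle\divh\xi^n-\partial_t u,\,u^{n-1}-\bar u\rangle$, the exact analogue of $G(u)-G(\bar u)\le\langle\dive\xi-\partial_t u,u-\bar u\rangle$. Substituting this together with the summation by parts, the two $\divh\xi^n$ terms cancel precisely as in Proposition \ref{estimateAT}; the remaining term $\langle\xi^n,\gradh\bar u\rangle$ is bounded by $\sum_{i,j}|\gradh\bar u|_{i,j}$ (discrete Cauchy--Schwarz with $|\xi^n|\le1$, the analogue of $\intomega[\xi,D\bar u]\le\intomega|D\bar u|$) and cancels the $-\sum_{i,j}|\gradh\bar u|_{i,j}$ produced by the total variation part. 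What survives is $2\sum_{i,j}|\partial_t\xi|_{i,j}+\langle\partial_t u,\bar u-u^{n-1}\rangle$. A final Cauchy--Schwarz bounds the pairing by $|\partial_t u^n|\,|u^{n-1}-\bar u|$, while $2\sum_{i,j}|\partial_t\xi|_{i,j}\le2\sqrt{N\times M}\,|\partial_t\xi^n|$ by Cauchy--Schwarz on the $N\times M$ grid, which gives the announced estimate.

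The main obstacle I anticipate is the time--index bookkeeping. The semi-implicit scheme evaluates the ascent at $\xi^n$ but the descent subgradient $p^n$ at the previous iterate $u^{n-1}$, so one must keep careful track of which iterate enters the projection inequality and which enters the subgradient inequality; it is exactly the evaluation of $p^n$ at $u^{n-1}$ that produces the mixed factor $|u^{n-1}-\bar u|$ rather than a single index. This indexing is the only genuine departure from the continuous argument of Proposition \ref{estimateAT}. No boundary term appears because the Dirichlet contribution is carried by $G^h$; were it kept separate, it would be treated verbatim using the infimum characterization recalled in the Remarks after Proposition \ref{HROFmaxbis}.
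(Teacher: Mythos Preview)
Your proposal is correct and follows exactly the approach the paper intends: the paper omits the proof entirely, saying only that it is ``exactly the same as for Proposition~\ref{estimateAT}'', and your outline reproduces that continuous argument step by step in the discrete setting with precisely the two systematic replacements you name (the projection/normal-cone inequality in place of $\partial\ind$, and $\divh=-(\gradh)^*$ in place of Green's formula). The time-index bookkeeping you flag is indeed the only subtlety, and your diagnosis---that the explicit evaluation of $p^n\in\partial G^h(u^{n-1})$ is what forces the factor $|u^{n-1}-\bar u|$---is correct.
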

We omit the proof because it is exactly the same as for Proposition \ref{estimateAT}. 
\begin{remarque} \hspace{2cm}
\begin{itemize}
	\item The boundary conditions are hidden here in the operator $\gradh$.
	\item In the discrete framework, the estimate involves $|u^n-\bar{u}|$ which  can not be easily bounded by the initial error.
\end{itemize}

\end{remarque}

\section{Numerical Experiments}
To illustrate the relevance of our \textit{a posteriori} estimates, we first consider the simple example of denoising  a rectangle (see Figure \ref{denoisecarre}). We then compare the  \textit{a posteriori} error bound with the "true" error. We use the relative $L^2$ error defined as $\dfrac{|u^n-\bar{u}|}{|\bar{u}|}$ and ran the algorithm of Chan and Zhu with $\lambda=0.005$ and fixed time steps verifying $\lambda \delta t=1$ and $\delta \tau=\frac{\lambda}{5}$. With this choice of parameters convergence is guaranteed by the work of Esser and al. \cite{esser}.  The minimizer $\bar{u}$ is computed by the algorithm after 50000 iterations. Figure \ref{courbe} shows that the \textit{a posteriori} bound is quite sharp. \\

\begin{figure}[ht]

 \centering
     \subfigure{
                
          \includegraphics[width=5cm]{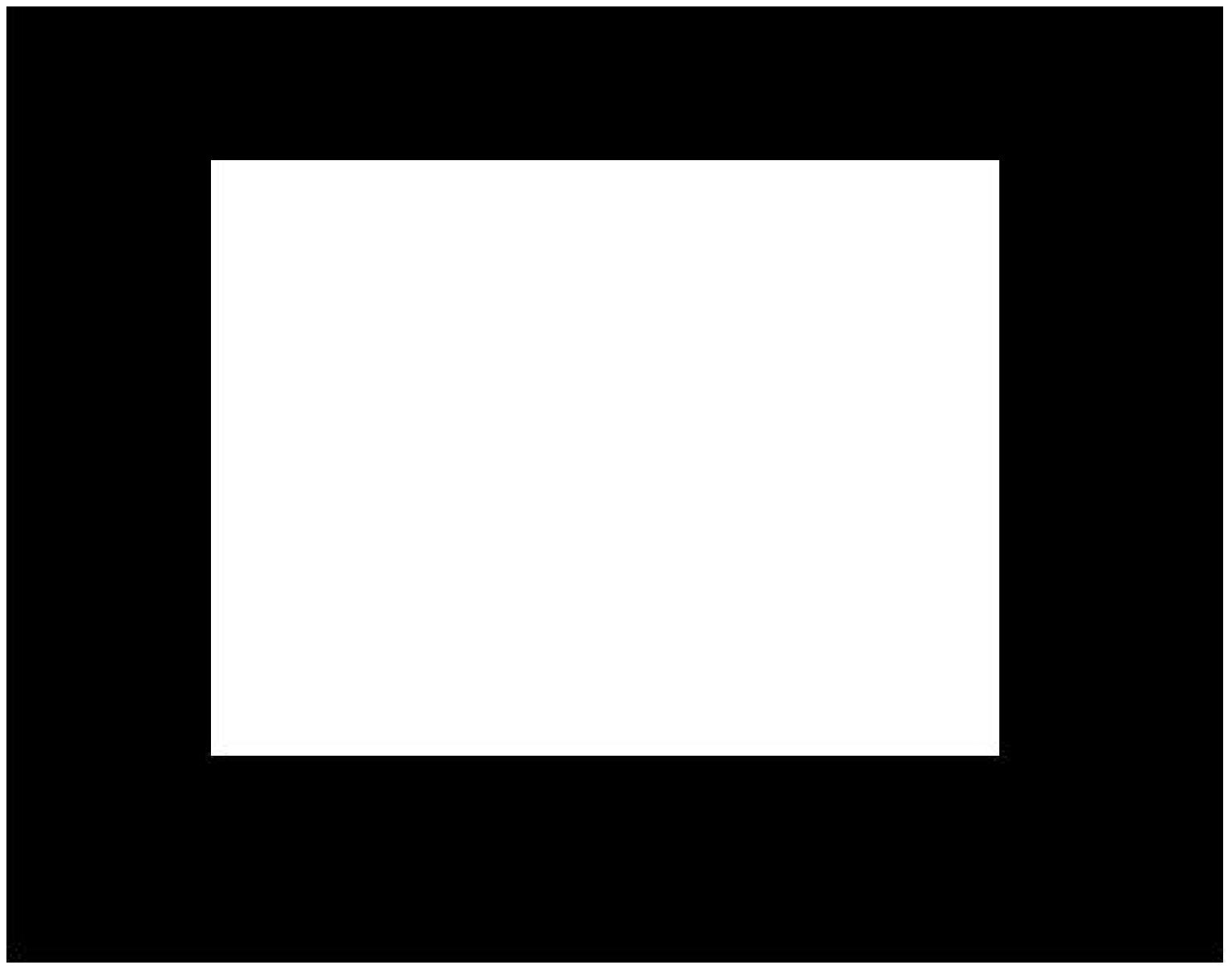}}
     \hspace{.3in}
     \subfigure{
         
          \includegraphics[width=5cm]{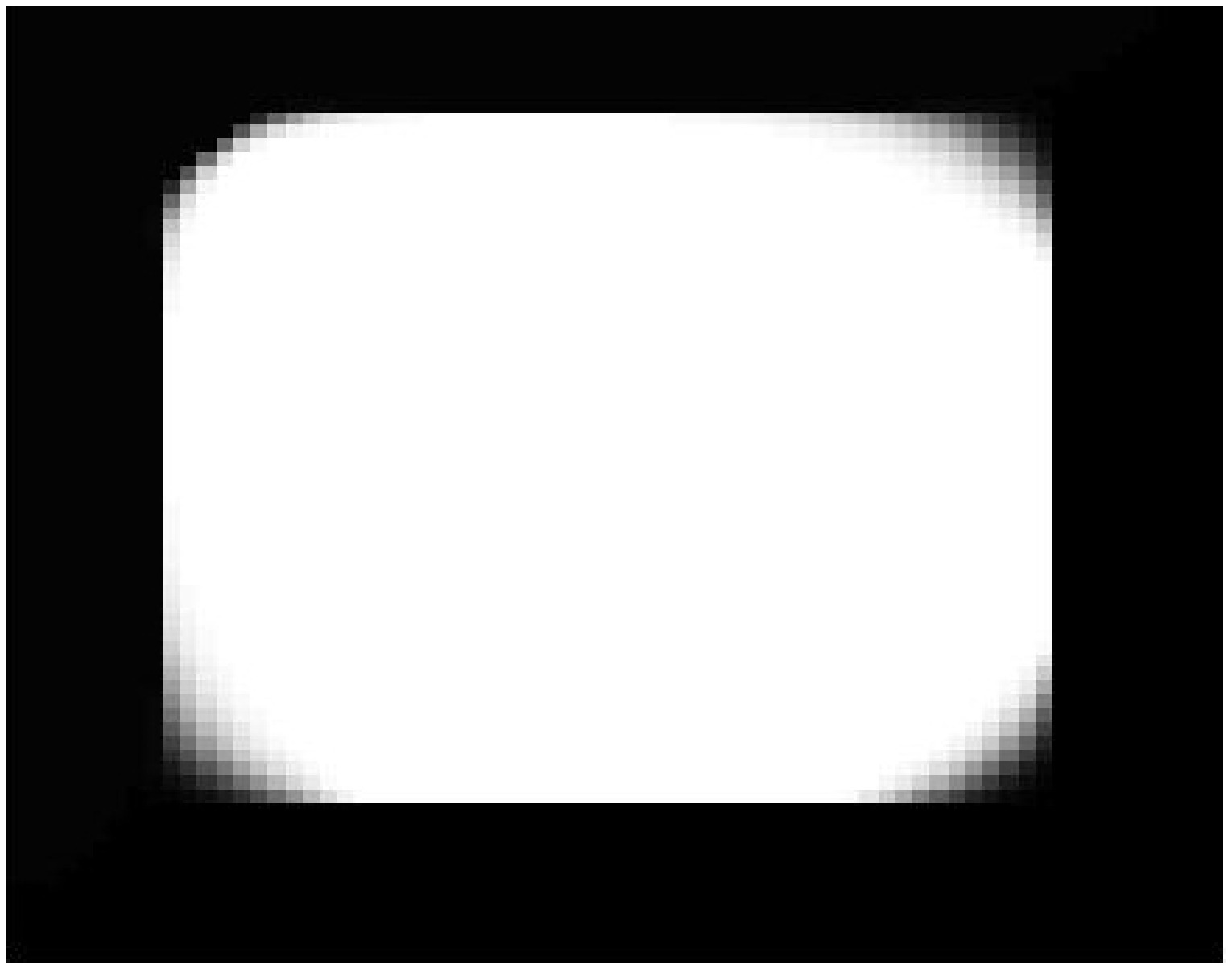}}%		
\caption{Denoising of a rectangle using the ROF model}
\label{denoisecarre}
\end{figure}

\begin{figure}[ht]

 \centering
\includegraphics[scale=0.4]{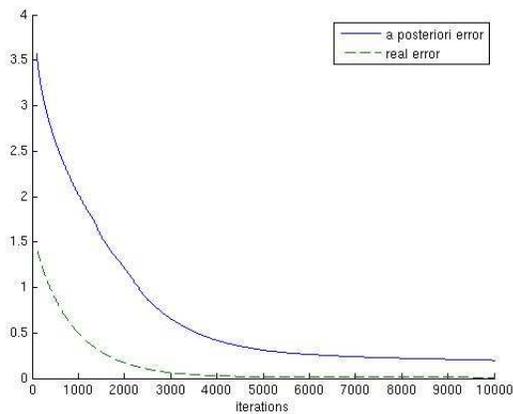}
\caption{Comparison of the relative $L^2$ error with the predicted \textit{a posteriori} bound.}
\label{courbe}
\end{figure}

The second experiment is performed on the yeast segmentation of Figure \ref{levures}. The solution was computed with the algorithm of Chan and Zhu using as weight function $g$ the one proposed by Appleton and Talbot \cite{AT}. We used this time the error  $|J_h(u^n)-J_h(\bar{u})|$ and ran the algorithm with  $\delta t=0.2$ and $\delta \tau =0.2$. For this problem there is no proof of convergence of the algorithm. The minimizer $\bar{u}$  is computed by the algorithm after 50000 iterations. We can see on Figure \ref{courbeyeasts} that for this problem, the \textit{a posteriori} estimate is not so sharp. We must also notice that in general we do not know $\bar{u}$. \\

\begin{figure}[ht]

 \centering
\includegraphics[scale=0.4]{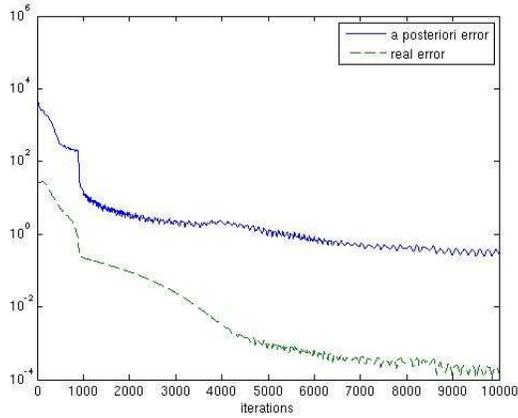}
\caption{Comparison for the segmentation problem.}
\label{courbeyeasts}
\end{figure}

In the last numerical example, we compare the results obtained by the algorithm of Appleton and Talbot (see \cite{AT}) with those obtained by a classical discretization of the total variation. In Figure \ref{cercle}, we can see the denoising of a disk with these two methods for $\lambda=0.003$.  We used the algorithm of Chan and Zhu \cite{zhuchan} to compute the minimization of the discrete total variation.\\
 Looking at the top right corner (see Figure \ref{corner}), we can see that the result is more accurate and less anisotropical for the algorithm of Appleton and Talbot than for the scheme of Chan and Zhu. These results are to be compared with those obtained by Chambolle and al. for the so-called ``upwind'' discrete $BV$ norm in \cite{chambisot}.

\begin{figure}[ht]

 \centering
     \subfigure{
                
          \includegraphics[width=6.2cm]{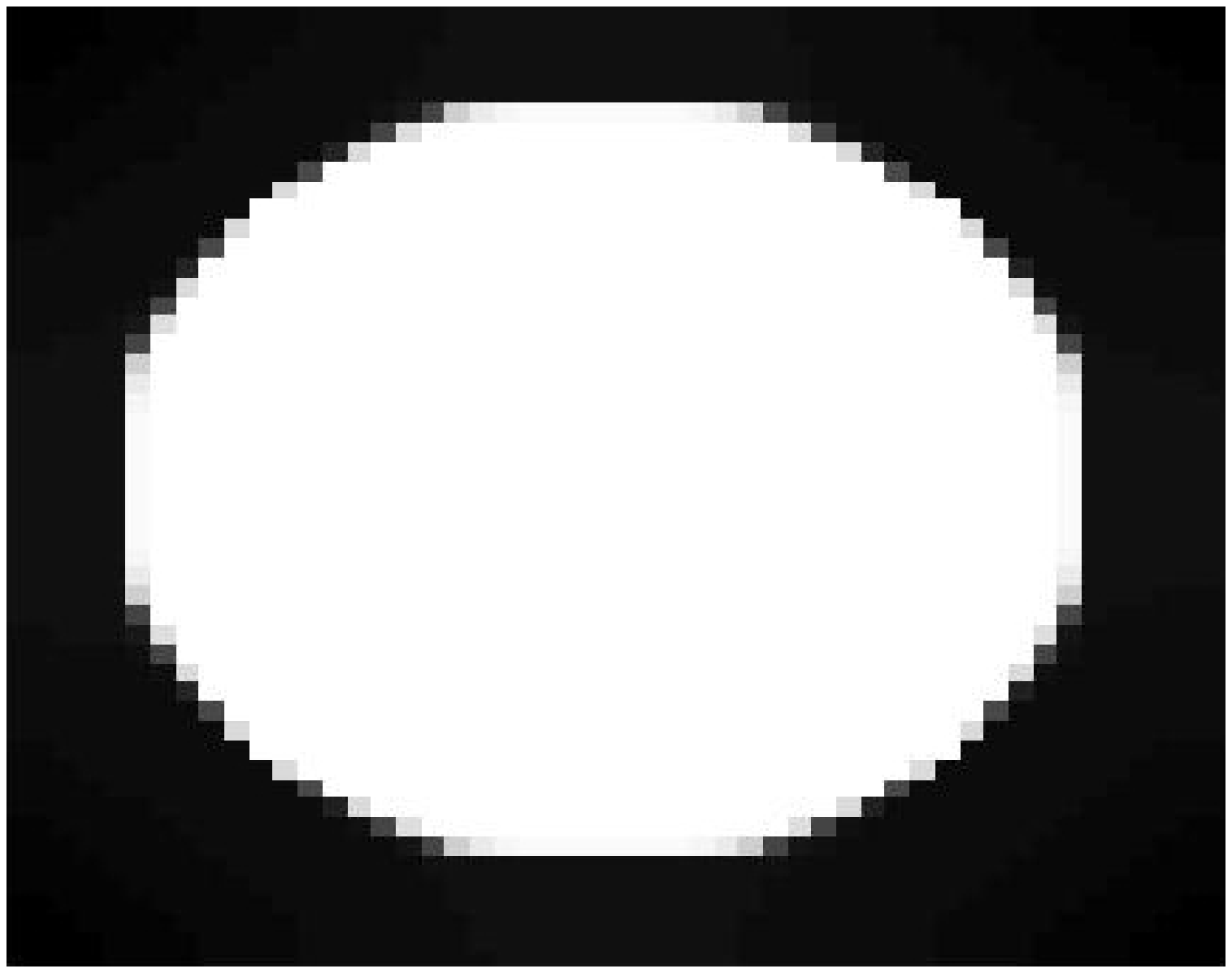}}
     \hspace{.3in}
     \subfigure{
         
          \includegraphics[width=6.2cm]{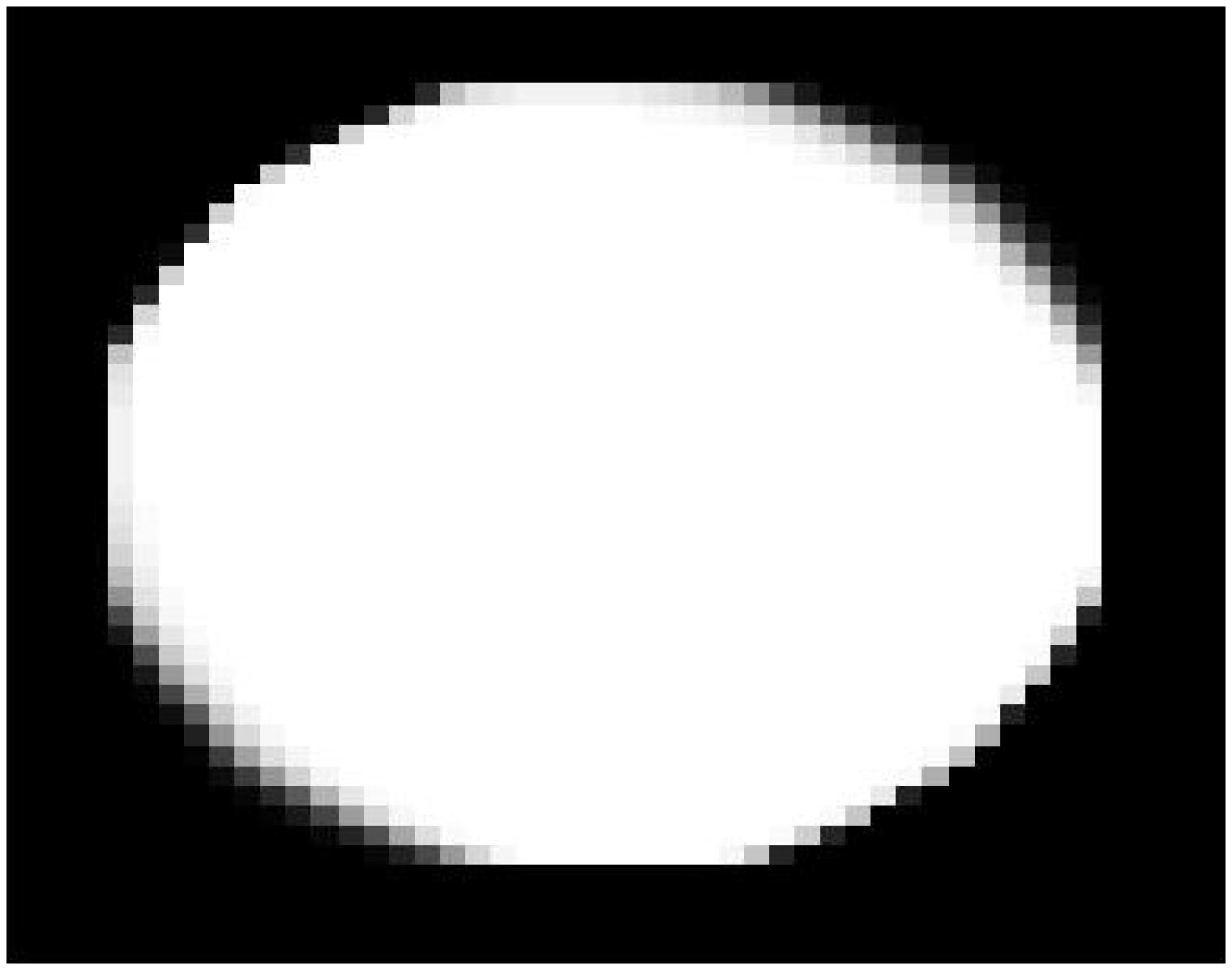}}%		
\caption{Denoising of a disk using the algorithm of Appleton-Talbot (left) and Chan-Zhu (right)}
\label{cercle}
\end{figure}

\begin{figure}[ht]

 \centering
     \subfigure{
                
          \includegraphics[width=5cm]{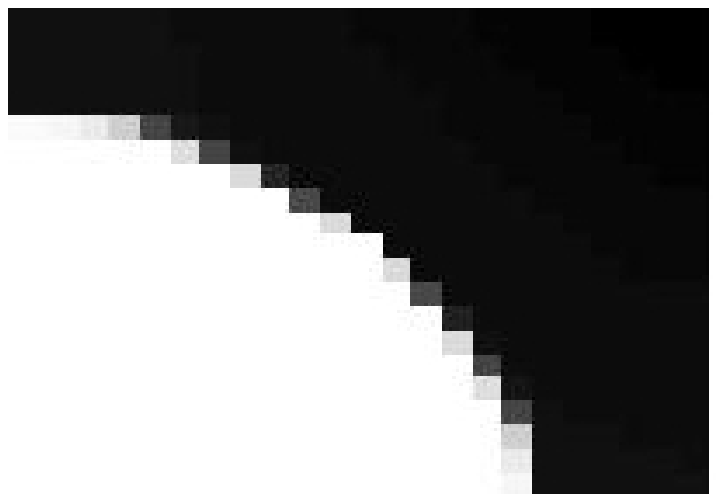}}
     \hspace{.3in}
     \subfigure{

          \includegraphics[width=5cm]{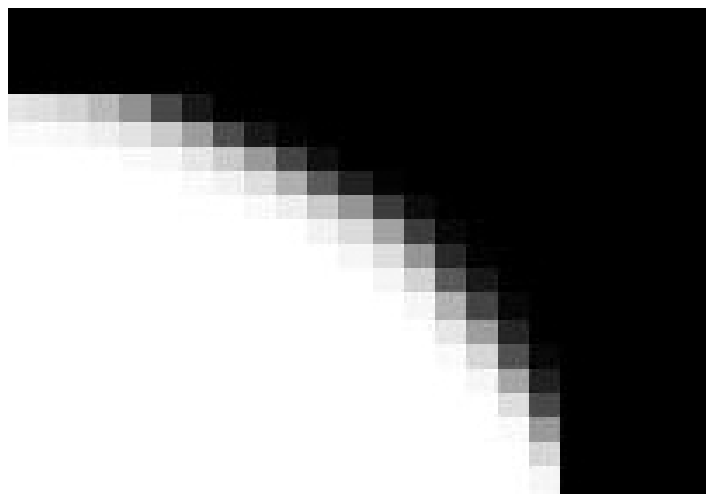}}%		
\caption{Top right corner of the denoised disk, Appleton-Talbot (left) and Chan-Zhu (right)}
\label{corner}
\end{figure}

\section{Conclusion}
In this article we have shown the well posedness of the continuous Primal-Dual method proposed by Appleton and Talbot for solving problems arising in imaging. We have also proved for the ROF model, that in the continuous setting there is convergence towards the minimizer. We then derived some \textit{a posteriori} estimates. Numerical experiments have illustrated that if these estimates are quiet sharp for the ROF model, they should be improved for applications to other problems. \\
The continuous framework leaves the way open to a wide variety of numerical schemes, ranging from finite differences to finite volumes. Indeed, by designing algorithms solving the system of PDEs (\ref{primaldual}) one can expect to find accurate algorithms for computing solutions of variational problems involving a total variation term. 
\appendix
\section{Proof of Proposition \ref{posterioriROF}}

For notational convenience, we present the proof for $\lambda=1$. Let $\bar{u}$ be the minimizer of $J_h$ then there exists $\bar{\xi}$  such that $|\bar{\xi}|_\infty \leq 1$ and
\[\begin{cases}
   \sum_{i,j} |\gradh \bar{u}|_{i,j}=\langle \gradh \bar{u},\bar{\xi}\rangle\\[8pt]
\bar{u}=\divh \bar{\xi}+f
  \end{cases}\]

Reminding that $u^n=f+\divh \xi^{n+1}-\partial_t u^n$ we get
\begin{align*}
 |u^n-\bar{u}|^2 &=\langle\divh(\xi^{n+1}-\bar{\xi})-\partial_t u^n,u^n-\bar{u}\rangle\\
 		&=-\langle\xi^{n+1}-\bar{\xi},\gradh u^n-\gradh \bar{u}\rangle-\langle \partial_t u^n,u^n-\bar{u}\rangle\\
 		&\leq \langle\bar{\xi}-\xi^{n+1},\gradh u^n\rangle+|\partial_t u^n| |u^n-\bar{u}| 
\end{align*}

We have that $\xi^{n+1}=P_{B(0,1)}(\xi^{n}+\delta \tau^{n+1} \gradh u^{n})$ hence by definition of the projection,
\[\forall \bar{\xi} \in B(0,1) \qquad \langle\xi^{n+1}-(\xi^{n}+\delta \tau^{n+1} \gradh u^n),\bar{\xi}-\xi^{n+1}\rangle \geq 0\]

This gives us
\[\langle \gradh u^n,\bar{\xi}-\xi^{n+1}\rangle\leq \langle \partial_t \xi^n,\bar{\xi}-\xi^n \rangle\]
Combining this with $\langle \partial_t \xi^n,\bar{\xi}\rangle-\langle \partial_t \xi^n,\xi^n\rangle \leq 2\sqrt{N\times M}|\partial_t \xi^n|$ (which holds by Cauchy-Schwarz's inequality, $|\bar{\xi}|_\infty \leq 1$ and $|\xi^n|_\infty \leq 1$), we find that
\[|u^n-\bar{u}|^2\leq 2\sqrt{N\times M}|\partial_t \xi^n|+|\partial_t u^n| |u^n-\bar{u}| \]
The announced inequality  easily follows.

%%%%%%%%%%%%%

\end{document}